\documentclass[a4paper]{amsart}
\usepackage[utf8x]{inputenc}
\usepackage[all]{xy}
\usepackage{amsmath,amsthm,amssymb,latexsym,epic,bbm,comment,mathbbol, mathrsfs}

\usepackage{graphicx,enumerate,stmaryrd,color}

\usepackage{IEEEtrantools}

\usepackage{stmaryrd}

\usepackage{hhline}

\usepackage{tikz}
\usetikzlibrary{matrix,arrows,decorations.pathmorphing}
\usetikzlibrary{positioning}
\usepackage[parfill]{parskip}
\usepackage[active]{srcltx}

\title[$2$-representations of $2$-subcategories of projective functors]{Simple transitive $2$-representations of 
left cell $2$-subcategories of projective functors for star algebras}
\author{Jakob Zimmermann}
\date{\today}

\newtheorem{theorem}{Theorem}[section]
\newtheorem{proposition}[theorem]{Proposition}
\newtheorem{lemma}[theorem]{Lemma}

\theoremstyle{definition}

\newtheorem{remark}[theorem]{Remark}

\newtheorem{conjecture}{Conjecture}

\newcommand{\setof}[2]{\{#1\;|\; #2\}}
\newcommand{\on}[1]{\operatorname{#1}}

\font\sc=rsfs10

\newcommand{\cC}{\sc\mbox{C}\hspace{1.0pt}}

\renewcommand{\l}{\mathcal{L}}

\usepackage{enumerate}

\begin{document}

\begin{abstract}
In this paper we study simple transitive $2$-representations of certain $2$-subcategories of the $2$-category of 
projective functors over a star algebra. We show that in the simplest case, which is associated to the Dynkin type $A_2$, simple 
transitive $2$-representations are classified by cell $2$-representations. In the general case we conjecture that there
exist many simple transitive $2$-representations which are not cell $2$-representations and provide some evidence for our conjecture.
\end{abstract}

\maketitle
\section{Introduction}\label{s1}
In 2010 Mazorchuk and Miemietz (\cite{MM1, MM2, MM3, MM4, MM5, MM6}) started an abstract study of $2$-representations 
of finitary $2$-categories. This was inspired by the categorification philosophy and the related use of categorical actions 
which appeared, among others, in the celebrated categorification of the Jones polynomial; see \cite{Kh}, and in the proof of 
Broué's abelian defect group conjecture for symmetric groups in \cite{CR}. For more details about classification problems in 
$2$-representation theory we refer the reader to the survey \cite{Maz2} and references therein.

In representation theory of finite dimensional (associative) algebras simple modules play an important role as
every finite dimensional module can be constructed from simple modules by repeated extensions, which leads to the classical
Jordan-H\"{o}lder theory. A weak analogue of Jordan-H{\"o}lder theory for $2$-representations of finitary $2$-categories
was developed in \cite{MM5}, where the role of simple modules is played by the so-called simple transitive $2$-representations. 
Since then a lot of research has been devoted 
to try to classify simple transitive $2$-representations for various finitary $2$-categories fitting the framework of \cite{MM5}, see
e.g.,\ \cite{MM5, KMMZ, Zi1, Zi2, MZ1, MMZ, MMZ}.

One of the easiest examples of a finitary $2$-category is the $2$-category of projectives functors $\cC_{\Lambda}$ over a given finite
dimensional associative algebra $\Lambda$. This $2$-category consists of one object which can be thought of as the category 
$\Lambda$-$\on{proj}$ of finite dimensional projective (left) $\Lambda$-modules, $1$-morphisms are, morally, endofunctors of 
$\Lambda$-$\on{proj}$ isomorphic to tensoring with direct sums of projective and regular $\Lambda$-$\Lambda$-bimodules and 
$2$-morphisms are natural transformations. A classification of simple transitive $2$-representations of $\cC_{\Lambda}$, for $\Lambda$
self-injective, is given already in \cite[Theorem 15]{MM5}. It turns out that every simple transitive $2$-representations is
equivalent to a so-called cell $2$-representation. The latter $2$-representations were defined in \cite{MM1} and associated 
naturally to the combinatorial structure of the $2$-category which is usually referred to as the \emph{cell structure}.
Recently, the fact that simple transitive $2$-representations of $\cC_{\Lambda}$ are exhausted by cell $2$-representations was proved 
in full generality (that is for arbitrary $\Lambda$) in \cite{MMZ}. Some intermediate results appeared in \cite{MZ1, Zi2, MMZ0}.

The $2$-category $\cC_{\Lambda}$ has many left and right cells. Roughly speaking, they are indexed by isomorphism classes of 
indecomposable projective $\Lambda$-modules. In the present paper we look at a certain $2$-subcategory of $\cC_{\Lambda}$ in which 
we keep only one of these left cells without any restrictions on the right cells. The question we ask is: what are simple 
transitive $2$-representations of this $2$-subcategory. In this way we hope to break the ``left-right symmetry'' of $\cC_{\Lambda}$
and produce a $2$-category with a completely different behaviour. The disadvantage of this situation is that 
none of the general approaches of \cite{MM5, MMZ} is applicable. Therefore, we are able to answer the question 
only for some very special algebra $\Lambda_1$. For this algebra the answer is similar to that for $\cC_{\Lambda}$. However,
the algebra $\Lambda_1$ is naturally a representative of a family of algebras denoted $\Lambda_n$ which
are associated to certain star shaped quivers. We also investigate simple transitive $2$-representations
for similar $2$-subcategories for all algebras $\Lambda_n$. The behaviour which we observe suggests that, for 
$n > 1$, simple transitive $2$-representations are not exhausted by cell $2$-representations. We provide some evidence 
to substantiate this claim. However, at the present stage we are not able to give an explicit construction of non cell 
simple transitive $2$-representations, but we indicate how they should look like.

The paper is organized in the following way. In the next section we introduce all necessary notions and notation. 
In Section \ref{s3} we generalize \cite[Theorem 11]{KMMZ}, under some additional assumptions, which ensures that even in the setup
of the present paper the action of $1$-morphisms on a simple transitive $2$-representation maps any object to a projective object.
In Section \ref{s4} we classify simple transitive $2$-representations of our $2$-category in the case of the algebra $\Lambda_1$.
In Section \ref{s5} we describe numerical invariants of simple transitive $2$-representations in the general case of $\Lambda_n$, 
where $n > 1$. Based on these invariants, we formulate a conjecture about classification of simple transitive $2$-representations. 
In the final section we look at $2$-subcategories of $\cC_{\Lambda_n}$ where we keep only one right cell and 
in this case we show that simple transitive $2$-representations are classified by cell $2$-representations.

\section{Preliminaries}\label{s2}
Let $\Bbbk$ be an algebraically closed field. We write $\otimes$ for $\otimes_{\Bbbk}$.

\subsection{Finitary $2$-categories and their representations}\label{s2.1}
A \emph{$2$-category} is a category enriched over the category \textbf{Cat} of all small categories. Hence a $2$-category $\mathscr{C}$ 
consists of the following data:
\begin{itemize}
 \item objects, denoted by $\mathtt{i}, \mathtt{j}, \mathtt{k}, \ldots$;
 \item For each $\mathtt{i},\mathtt{j}$, a small category $\mathscr{C}(\mathtt{i},\mathtt{j})$ of $1$-morphisms 
 which we denote by $F, G, H,  \ldots$ and bifunctorial composition, denoted by $\circ$;
 \item indecomposable identity $1$-morphisms $\mathbb{1}_{\mathtt{i}}$;
 \item $2$-morphisms denoted by $\alpha, \beta, \gamma, \ldots$;
 \item identity $2$-morphisms $\text{id}_{F}$;
 \item and horizontal and vertical composition of $2$-morphisms denoted by $\circ_h$ and $\circ_v$, respectively.
\end{itemize}
Moreover, all the obvious strict axioms are satisfied. We often write $F(\alpha)$ for $\text{id}_F \circ_h \alpha$ and
$\alpha_F$ for $\alpha \circ_h \text{id}_F$.

Recall that a category $\mathcal{C}$ is called \emph{finitary $\Bbbk$-linear}, if it is equivalent to $B$-proj for some $\Bbbk$-linear, 
associative, unital, finite dimensional, basic algebra $B$. We say that a $2$-category $\mathscr{C}$ is \emph{finitary} if it has 
finitely many objects, each morphism category $\mathscr{C}(\mathtt{i},\mathtt{j})$ is finitary $\Bbbk$-linear and all compositions are 
biadditive and $\Bbbk$-(bi)linear, whenever applicable.

Examples of $2$-categories are:
\begin{itemize}
 \item The $2$-category $\textbf{Cat}$ of small categories whose objects are small categories, $1$-morphisms are functors and 
 $2$-morphisms are natural transformations.
 \item The $2$-category $\mathfrak{A}_{\Bbbk}^f$ of \emph{finitary $\Bbbk$-linear} categories, whose objects are finitary $\Bbbk$-linear
 categories, $1$-morphisms are additive $\Bbbk$-linear functors and $2$-morphisms are natural transformations.
 \item The $2$-category $\mathfrak{R}_{\Bbbk}$ of \emph{finitary $\Bbbk$-linear abelian} categories, whose objects are categories 
 equivalent to module categories of finite dimensional associative $\Bbbk$-algebras, $1$-morphisms are right exact additive $\Bbbk$-linear 
 functors and $2$-morphisms are natural transformations.
\end{itemize}

From now on let $\mathscr{C}$ denote a finitary $2$-category. 

A \emph{finitary $2$-representation} of $\mathscr{C}$ is a (strict) $2$-functor $\mathbf{M}: \mathscr{C} \to \mathfrak{A}_{\Bbbk}^{f}$. 
All finitary $2$-representations form a $2$-category, denoted by $\mathscr{C}$-afmod, where $1$-morphisms are given by strong natural 
transformations and $2$-morphisms are modifications, see \cite[Subsection 2.3]{MM3}. 

Analogously we define the $2$-category $\mathscr{C}$-mod as the $2$-category of abelian $2$-re\-pre\-sen\-ta\-tions which are 
$2$-functors from $\mathscr{C}$ to $\mathfrak{R}_{\Bbbk}$. There is a way to \emph{abelianize} a finitary $2$-representation
$\mathbf{M}$, using the diagrammatic \emph{abelianization} $2$-functor 
\begin{displaymath}
 \overline{\cdot}: \mathscr{C}\text{-afmod} \to \mathscr{C}\text{-mod},
\end{displaymath}
see \cite[Subsection 3.1]{MM1}. 
We note that $\mathbf{M}$ is equivalent to $\overline{\mathbf{M}}_{\text{proj}}$; see \cite[Theorem 11]{MM2}. 
We denote $2$-representations by bold capital roman letters $\mathbf{M}, \mathbf{N}, \ldots$. 

For each $\mathtt{i} \in \mathscr{C}$, we define the \emph{principal} $2$-representation 
$\mathbf{P}_{\mathtt{i}} := \mathscr{C}(\mathtt{i},{}_-)$, for which we have the usual Yoneda lemma, see \cite[Lemma 3]{MM3}.
%

A $2$-representation $\mathbf{M}$ of $\mathscr{C}$ is called \emph{$1$-faithful} if $\mathbf{M}(F) \neq 0$ for all 
$1$-morphisms $F \in \mathscr{C}$. If $\cC$ has only one object $\mathtt{i}$, we say that a finitary $2$-representation of 
$\cC$ has rank $r$ if the algebra $B$ satisfying $\mathbf{M}(\mathtt{i}) \simeq B$-$\text{proj}$ has exactly $r$ isomorphism 
classes of indecomposable projectives.

For more details about $2$-categories and $2$-representations we refer the reader to \cite{Le, Mac, Maz1, Maz2}.

\subsection{Simple transitive $2$-representations}\label{s2.2}
A finitary $2$-representation $\mathbf{M}$ of $\mathscr{C}$ is called \emph{transitive} if, for any $\mathtt{i}$ and $\mathtt{j}$ and 
any indecomposable objects $X \in \mathbf{M}(\mathtt{i})$ and $Y \in \mathbf{M}(\mathtt{j})$, there is a $1$-morphism $F$ of $\mathscr{C}$
such that $Y$ is isomorphic to a direct summand of $\mathbf{M}(F)X$. 

A finitary $2$-representation $\mathbf{M}$ of $\mathscr{C}$ is called \emph{simple} provided that it does not have any non-zero proper
$\mathscr{C}$-invariant ideals. Note that simplicity implies transitivity, however, we will
always speak about \emph{simple transitive} $2$-representations, for historical reasons.

\subsection{Cells and cell $2$-representations}\label{s2.3}
For indecomposable $1$-morphisms $F$ and $G$ of $\mathscr{C}$, we write $F \geq_L G$ provided that there exists a $1$-morphism $H$ in
$\mathscr{C}$, such that $F$ is isomorphic to a direct summand of $H \circ G$. This defines the \emph{left} preorder $\geq_L$, whose 
equivalence classes are \emph{left cells}. Similarly one defines the \emph{right} preorder $\geq_R$ and the \emph{two-sided}
order $\geq_J$, and the corresponding \emph{right} and \emph{two-sided} cells, respectively.

By \cite{CM}, for every simple transitive $2$-representation $\mathbf{M}$ there exists a unique maximal with respect to $\geq_J$ 
two-sided cell which is not annihilated by $\mathbf{M}$; called the \emph{apex} of $\mathbf{M}$. A two-sided cell $\mathcal{J}$ of 
$\cC$ is called \emph{idempotent}, if it contains elements $F, G$ and $H$ such that $F$ is isomorphic to a direct summand of 
$G \circ H$. Observe that the apex always is an idempotent two-sided cell.

Let $\mathcal{L}$ be a left cell in $\mathscr{C}$, then there exists a unique $\mathtt{i}_{\mathcal{L}} = \mathtt{i}$ such that 
all $F \in \mathcal{L}$ start at $\mathtt{i}$. The corresponding \emph{cell $2$-representation} $\mathbf{C}_{\mathcal{L}}$ is the 
subquotient of the principal $2$-representation $\mathbf{P}_{\mathtt{i}}$ obtained by taking the unique simple transitive quotient
of the subrepresentation of $\mathbf{P}_{\mathtt{i}}$ given by the additive closure of all $1$-morphisms $F$ such that 
$F \geq_L \mathcal{L}$. The $2$-representation $\mathbf{C}_{\mathcal{L}}$ is, by construction, simple transitive. We refer the reader 
to \cite[Subsection 6.5]{MM5} for details about simple transitive and cell $2$-representations.

\subsection{The matrix of the action of a $1$-morphism}\label{s2.4}
Let $\mathbf{M}$ be a finitary $2$-representation of $\mathscr{C}$ and $F$ a $1$-morphism in $\mathscr{C}$. 
We can already say a lot about $\mathbf{M}$ by studying what we will call the \emph{matrix $[F]$ of the action of $F$} on the 
indecomposable projectives in $\mathcal{M} := \coprod_{\mathtt{i}}\mathbf{M}(\mathtt{i})$. The rows and columns of $[F]$ are indexed
by isomorphism classes of indecomposable objects in $\mathcal{M}$ and the $X \times Y$-entry encodes the multiplicity of $X$ as a direct 
summand of $\mathbf{M}(F)\,Y$. Thus $[F]$ is a non-negative, integral matrix.

If we additionally have that $\overline{\mathbf{M}}(F)$ is exact, then we also have the matrix $\llbracket F\rrbracket$ whose rows and 
columns are indexed by isomorphism classes of simple objects in $\overline{\mathcal{M}}$ and the $X \times Y$-entry is give the 
composition multiplicity of $X$ in $\overline{\mathbf{M}}(F)\,Y$.

If $(F,G)$ is an adjoint pair of $1$-morphisms, then $\overline{\mathbf{M}}(G)$ is exact and $[F]^t = \llbracket G \rrbracket$, see
\cite[Lemma 10]{MM5}.

\subsection{The algebra $\Lambda_n$}\label{s2.5}
For a positive integer $n$, the \emph{star} $S_n$ on $n + 1$ vertices is the graph 
\begin{displaymath}
 \xymatrix{
 0\ar@{-}[d]\ar@{-}[dr]\ar@{-}[drr]\ar@{-}[drrr] \\
 1 & 2 & \dots & n.
 }
\end{displaymath}
Let $\overline{S_n}$ be the double quiver of $S_n$, i.e.,\ the quiver where 
each edge $\{0,i\}$ in the underlying graph $S_n$ is replaced by arrows $a_i := (0,i), b_i := (i,0)$. 
For examples the quiver of $\overline{S_1}$ looks like
 \[
  \begin{tikzpicture}
  \matrix(m)[matrix of math nodes, row sep=2.5em, column sep=2.5em,
    text height=1.0ex, text depth=0.25ex]
    {0 & 1, \\};
    \path[->,font=\scriptsize]
    (m-1-1) edge[bend left] node[above] {$a_1$} (m-1-2)
    (m-1-2) edge[bend left] node[below] {$b_1$} (m-1-1);
    \end{tikzpicture}
\]
and for $\overline{S}_2$ we have
 \[
  \begin{tikzpicture}
  \matrix(m)[matrix of math nodes, row sep=2.5em, column sep=2.5em,
    text height=1.0ex, text depth=0.25ex]
    {1 & 0 & 2. \\};
    \path[->,font=\scriptsize]
    (m-1-1) edge[bend right] node[below] {$b_1$} (m-1-2)
    (m-1-2) edge[bend right] node[above] {$a_1$} (m-1-1)
    (m-1-3) edge[bend left] node[below] {$b_2$} (m-1-2)
    (m-1-2) edge[bend left] node[above] {$a_2$} (m-1-3);
    \end{tikzpicture}
\]

Let $\Bbbk \overline{S_n}$ be the path algebra of $\overline{S_n}$.
We now define the algebra $\Lambda = \Lambda_n$ as the quotient of $\Bbbk \overline{S_n}$ by certain relations 
(which depend on $n$). In the case $n = 1$ we require that
\begin{itemize}
 \item $b_1a_1b_1 = a_1b_1a_1 = 0$.
\end{itemize}
If $n > 1$, we require that
\begin{itemize}
 \item for all $1 \leq i,j \leq n$ we have $b_ia_i = b_ja_j$;
 \item for all $1 \leq i < j \leq n$ we have $a_jb_i = a_ib_j = 0$.
\end{itemize}
Note that these relations imply that for all $1 \leq i \leq n$ we have $a_ib_ia_i = b_ia_ib_i = 0$.
For $0 \leq i \leq n$, we denote by $e_i$ the idempotent of $\Lambda$ which corresponds to the vertex $i$. 
We set $P_i := \Lambda e_i$ and denote the simple top of $P_i$ by $L_i$.

The structure of projective $\Lambda$-modules follows directly from the defining re\-la\-tions:
\begin{itemize}
\item If $1 \leq i \leq n$, then $P_i$ is a uniserial pro\-jec\-tive-in\-jec\-tive module of Loe\-wy
length three. Its top and so\-cle are isomorphic to $L_i$ and the module $\mathrm{Rad}(P_i)/\mathrm{Soc}(P_i)$
is iso\-mor\-phic to the simple $L_0$.
\item The projective $P_0$ is also injective of Loewy length three with  top and socle isomorphic to $L_0$. 
The module $\mathrm{Rad}(P_0)/\mathrm{Soc}(P_0)$ is isomorphic to the multiplicity free direct sum of all 
$L_i$ for $1 \leq i \leq n$.
\end{itemize}
From the above description we see that the algebra $\Lambda$ is self-injective.

\subsection{The $2$-category of projective $\Lambda$-$\Lambda$-bimodules}\label{s2.6}
Let $n \geq 1$ and $\Lambda = \Lambda_n$ be the algebra defined in Section \ref{s2.5}. Moreover, let $\mathcal{C}$ 
be a small category equivalent to $\Lambda$-proj. Then we define the $2$-category $\cC_{\Lambda, \mathcal{C}} = \cC_{\Lambda}$ 
to be the $2$-category with  
\begin{itemize}
 \item one object $\mathtt{i}$ which we identify with $\mathcal{C}$;
 \item $1$-morphisms which are those endofunctors of $\mathcal{C}$ that are isomorphic to tensoring with $\Lambda$-$\Lambda$-bimodules in 
 $\on{add}(\Lambda \oplus (\Lambda\otimes_{\Bbbk} \Lambda))$;
 \item $2$-morphisms being all natural transformations of functors.
\end{itemize}
Note that the indecomposable $1$-morphisms in $\cC_{\Lambda}$ are, up to isomorphism, the identity $\mathbb{1}_{\mathtt{i}}$ 
which is isomorphic to tensoring with the $\Lambda$-$\Lambda$-bimodule $_\Lambda \Lambda_{\Lambda}$ and
\[
 F_{ij} \simeq \Lambda e_i \otimes e_j\Lambda \otimes_\Lambda {}_-, \quad \text{for $0 \leq i,j \leq n$.}
\]
The cell structure of $\cC_{\Lambda}$ is described as follows.
For each $0 \leq i \leq n$ there is a left cell $\mathcal{L}_i := \setof{F_{ji}}{0 \leq j \leq n}$, and a 
right cell $\mathcal{R}_i = \setof{F_{ij}}{0 \leq j \leq n}$. The union of all $\mathcal{L}_i$ forms the 
maximal two-sided cell $\mathcal{J}$ of $\cC_{\Lambda}$. Moreover, there is the left, right and two-sided cell 
$\mathcal{J}_e$ consisting of the identity $1$-morphism $\mathbb{1}_{\mathtt{i}}$.

The main object of study in the present paper is the $2$-full $2$-subcategory $\cC = \cC_n$ of $\cC_{\Lambda_n}$ given by
the additive closure of $1$-morphisms $\mathbb{1}_{\mathtt{i}}, F_{00}, F_{10}, \ldots, F_{n0}$. We will loosely refer to 
$\cC_n$ as a \emph{left cell} $2$-subcategory of $\cC_{\Lambda}$. Our main problem 
is to classify simple transitive $2$-representations of $\cC$.

Now, let $n \geq 1$ and $\mathbf{M}$ be a simple transitive $2$-representation of $\cC$ of rank $r$. There are, up to 
isomorphism and apart from the identity, $n+1$ indecomposable, pairwise non-isomorphic $1$-morphisms in $\cC$. 
Their composition is given by
\begin{align}\label{eq:1}
 F_{i0} \circ F_{j0} \simeq \begin{cases}
			F_{i0}^{\oplus 2}, \quad &\text{ if } j = 0;\\
					F_{i0}, \quad &\text{ else.}
                       \end{cases} 
\end{align}

From \eqref{eq:1} we can deduce that $\cC$ has one left cell $\l$ which does not contain $\mathbb{1}_{\mathtt{i}}$.
The cell $\l$ consists of all $F_{i0}$ and is a two-sided cell at the same time. Inside $\l$ there are $n + 1$
distinct right cells each consisting of exactly one of the $F_{i0}$.

For our study of simple transitive $2$-representations of $\cC$ we denote by $B$ a basic $\Bbbk$-algebra such that 
$\mathbf{M}(\mathtt{i})$ is equivalent to $B$-proj. Moreover, we let $1 = \epsilon_0 + \cdots + \epsilon_{r-1}$ be
a decomposition of the identity in $B$ into a sum of pairwise orthogonal primitive idempotents. In the same
spirit as for $\Lambda$, we denote by $G_{ij}$ the endofunctor of $\mathbf{M}(\mathtt{i})$ isomorphic to tensoring 
with the projective $B$-$B$-bimodule $B\epsilon_i \otimes \epsilon_jB$. Moreover, for $0 \leq i \leq r-1$, we denote 
by $Q_i$ the projective $B$-module $B\epsilon_i$ and by $\hat{L}_i$ the simple top of $Q_i$.

Now, we set 
\[
 F := \bigoplus_{i=0}^{n}F_{i0}.
\]

Due to the transitivity of $\mathbf{M}$, we have that the matrix $\mathsf{M} := [F]$ of $F$ must be irreducible.
Moreover, we have $\mathsf{M} = (n+2)\mathsf{M}$, as can be read off from \eqref{eq:1}. 
All such matrices are, up to a permutation of basis vectors, classified by \cite[Lemma~4.3]{TZ} which we will 
make use of later.

For now, we just note that we have the following lemma:
\begin{lemma}\label{lem1}
Let $\mathbf{M}$ be a transitive $2$-representation of $\cC$. 
Then there is an ordering of indecomposable objects in $\mathbf{M}(\mathtt{i})$ such that 
\begin{displaymath}
[F_{00}]=
\left(\begin{array}{c|c}2E&*\\\hline 0&0\end{array}\right),
\end{displaymath}
where $E$ is the identity matrix.
\end{lemma}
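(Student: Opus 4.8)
The plan is to squeeze everything possible out of the composition rule \eqref{eq:1} before any category theory is needed. Writing $A:=[F_{00}]$, the relation $F_{00}\circ F_{00}\simeq F_{00}^{\oplus 2}$ records that $A$ is a non-negative integral matrix with $A^2=2A$; in particular its eigenvalues lie in $\{0,2\}$ and $\tfrac12 A$ is idempotent over $\Q$. First I would feed in the remaining relations $F_{00}\circ F_{i0}\simeq F_{00}$ for $1\le i\le n$, which give $A\,[F_{i0}]=A$. Summing over $i$ and using $\mathsf{M}=A+\sum_{i\ge 1}[F_{i0}]$ together with $A^2=2A$ produces the single clean identity $A\,\mathsf{M}=(n+2)A$.

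Since $\mathbf{M}$ is transitive, $\mathsf{M}$ is irreducible and satisfies $\mathsf{M}^2=(n+2)\mathsf{M}$; by Perron--Frobenius (equivalently by \cite[Lemma~4.3]{TZ}) the value $n+2$ is then a simple eigenvalue while all others vanish, so $\mathsf{M}$ has rank one, say $\mathsf{M}=\mathbf{u}\mathbf{v}^{t}$ with $\mathbf{u},\mathbf{v}$ positive integral and $\mathbf{v}^{t}\mathbf{u}=n+2$. Substituting into $A\,\mathsf{M}=(n+2)A$ gives $A=\tfrac{1}{n+2}(A\mathbf{u})\,\mathbf{v}^{t}$, so $A$ itself has rank at most one, and its non-zero rows are exactly the indecomposable objects occurring in the image of $\mathbf{M}(F_{00})$. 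Ordering these first already puts $A$ in the shape $\left(\begin{smallmatrix}*&*\\0&0\end{smallmatrix}\right)$, and it remains to show that there is a single non-zero row and that its diagonal entry is $2$.

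The hard part will be this last point, because the numerics do not suffice: a rank-one non-negative integral matrix with $A^2=2A$ can have several non-zero rows — for example $\left(\begin{smallmatrix}1&1\\1&1\end{smallmatrix}\right)$ satisfies all of the above, and even extends to a non-negative solution of the full system coming from \eqref{eq:1}, yet admits no ordering of the required form. I therefore expect the genuine obstacle to be categorical: proving that the image of $\mathbf{M}(F_{00})$ is a single indecomposable object (up to multiplicity). To access this I would use that $F_{00}$ is self-adjoint in $\cC_{\Lambda}$, hence, since $\cC$ is $2$-full, self-adjoint in $\cC$; writing $\mathbf{M}(F_{00})\simeq V\otimes_{B}{}_-$ for the $B$-$B$-bimodule $V=\mathbf{M}(F_{00})(B)$ (projective as a left module), self-adjointness makes $V$ self-dual as a bimodule and turns $F_{00}\circ F_{00}\simeq F_{00}^{\oplus 2}$ into $V\otimes_{B}V\simeq V^{\oplus 2}$. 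The aim is to deduce from self-duality together with $V\otimes_{B}V\simeq V^{\oplus 2}$ that $V$ is isotypic as a left $B$-module, i.e. that $A$ has a unique non-zero row $X_0$. Once that is established the argument closes: $\mathbf{M}(F_{00})$ preserves $\on{add}(X_0)$, the $(X_0,X_0)$-entry of $A^2=2A$ forces $A_{X_0X_0}\in\{0,2\}$, and transitivity rules out $0$, since if $X_0$ is a summand of $\mathbf{M}(F_{00})Y$ then the adjunction isomorphism $\on{Hom}(\mathbf{M}(F_{00})X_0,\mathbf{M}(F_{00})Y)\cong\on{Hom}(X_0,\mathbf{M}(F_{00})^2Y)$ is non-zero, whence $\mathbf{M}(F_{00})X_0\neq 0$. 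Thus $A_{X_0X_0}=2$ and $A=\left(\begin{smallmatrix}2&*\\0&0\end{smallmatrix}\right)$, which is exactly the asserted block form with $E$ the $1\times 1$ identity.
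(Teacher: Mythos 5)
Your reduction is sound as far as it goes: the identities $A\,[F_{i0}]=A$ and $A\,\mathsf{M}=(n+2)A$, the resulting rank-one conclusion for $A=[F_{00}]$, the use of the adjunction to exclude $A_{X_0X_0}=0$ once a unique non-zero row is known, and the diagnosis that everything hinges on showing the image of $\mathbf{M}(F_{00})$ is concentrated on a single indecomposable --- all of this is correct, and it is consistent in spirit with the argument the paper invokes (the paper's own proof is a one-line deferral to the proof of \cite[Lemma~5.3]{Zi1}, which combines Flor's theorem for the quasi-idempotent $[F_{00}]$ with self-adjointness of $F_{00}$).

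However, the decisive step is missing. You write that ``the aim is to deduce from self-duality together with $V\otimes_BV\simeq V^{\oplus 2}$ that $V$ is isotypic as a left $B$-module'' and then continue as though this were established; it is not, and it is exactly the content of the lemma. Your own example is the obstruction: taking $[F_{00}]=\left(\begin{smallmatrix}1&1\\1&1\end{smallmatrix}\right)$ and $[F_{i0}]=\left(\begin{smallmatrix}1&1\\0&0\end{smallmatrix}\right)$ for $i\geq 1$ gives a non-negative integral solution of the entire system coming from \eqref{eq:1} with $\mathsf{M}$ irreducible, so it survives every constraint you have actually derived (rank one, $A^2=2A$, trace $2$, no indecomposable in the image annihilated by $F_{00}$). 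Moreover, the naive adjunction identities do not finish it either: if $F_{00}X_1\simeq F_{00}X_2\simeq X_1\oplus X_2$, then $\dim\on{End}(F_{00}X_j)=2\dim\on{Hom}(X_j,F_{00}X_j)$ only yields $c_{11}+c_{12}=c_{21}+c_{22}$ for $c_{ij}=\dim\on{Hom}(X_i,X_j)$, which is satisfied, e.g., by $c_{ij}\equiv 1$; likewise $\llbracket F_{00}\rrbracket=[F_{00}]^{\mathrm{tr}}$ is symmetric here and gives nothing for free. So a genuinely new input is required to rule out the two-row configuration, and that input is precisely what the cited proof supplies and what your proposal omits. A secondary, harmless point: the lemma as stated only asserts the block form with $2E$ of some size (the paper extracts ``exactly one $2$ on the diagonal'' later, from the trace of $\mathsf{M}$), so you are proving slightly more than necessary; but since your rank-one identity forces $E$ to be $1\times 1$ anyway, this does not move the location of the gap.
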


\begin{proof}
Mutatis mutandis the proof of \cite[Lemma 5.3]{Zi1}. 
\end{proof}

\subsection{Non-negative idempotent matrices}\label{s2.7}
Additionally, we will need the following classification result for non-negative idempotent matrices by Flor; see \cite{F69}.
\begin{theorem}
\label{Flor}
 Let $I$ be a non-negative real idempotent matrix of rank $k$. Then there exists a permutation
 matrix $P$ such that
 \[
 P^{-1}IP = 
   \begin{pmatrix}
    0 & AJ & AJB\\
    0 & J & JB \\
    0 & 0 & 0 \\
   \end{pmatrix}
\quad\text{ with }\quad
 J =  
  \begin{pmatrix}
    J_1 &  0   & \cdots & 0 \\
    0   & J_2 &  &  \vdots\\
    \vdots & & \ddots & 0\\
    0 & \cdots & 0 & J_k
  \end{pmatrix}.
\]
Here, each $J_i$ is a non-negative idempotent matrix of rank one and $A, B$ are non-negative matrices
of the appropriate size.
\end{theorem}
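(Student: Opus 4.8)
The statement is Flor's classification of non-negative real idempotent matrices, so the plan is to reduce the general case to the rank-one building blocks and then assemble the block form by permuting coordinates. Let $I$ be non-negative with $I^2 = I$ and $\on{rank}(I) = k$. First I would analyze the coordinate structure via the zero pattern. Call an index $p$ a \emph{zero index} if the $p$-th row of $I$ is zero, and note that $I^2 = I$ forces a dual statement for columns. My first step is to separate indices into three groups using the diagonal and the idempotency relation $\sum_m I_{pm}I_{mq} = I_{pq}$: those $p$ with $I_{pp} = 0$ split further according to whether the row is zero or not, and those with $I_{pp} > 0$ will turn out to be the support of the essential rank-one pieces. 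The key observation is that taking $p = q$ gives $\sum_m I_{pm}I_{mp} = I_{pp}$, and since all terms are non-negative, $I_{pp} = 0$ forces $I_{pm}I_{mp} = 0$ for every $m$; this is the combinatorial engine that produces the strictly-upper-triangular zero blocks in the target form.

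The second step is to isolate the ``core'' of $I$. I would pass to the principal submatrix $J$ supported on the indices $p$ with $I_{pp} > 0$ (after a permutation grouping them together), and argue that $J$ is itself idempotent of rank $k$ and, crucially, has no zero rows or columns. Here the plan is to invoke the spectral side: a non-negative idempotent has eigenvalues in $\{0,1\}$, and the trace equals the rank, so $\on{tr}(I) = \sum_p I_{pp} = k$. This ties the number and weights of the positive diagonal entries to the rank and lets me show the core carries all of the rank. The decomposition of the core into a block diagonal $J = \on{diag}(J_1,\dots,J_k)$ of rank-one idempotents is the heart of the argument: I would use the Perron--Frobenius theory of the irreducible components of $J$, showing that after permutation $J$ is block diagonal with irreducible diagonal blocks, and that each irreducible non-negative idempotent has rank exactly one (an irreducible idempotent $J_i$ satisfies $J_i^2 = J_i$, so its Perron eigenvalue is $1$ with a strictly positive eigenvector, and idempotency kills all other eigenvalues, forcing rank one, necessarily of the outer-product form $u v^t$ with $v^t u = 1$).

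The third step recovers the off-diagonal blocks $AJ$, $AJB$, and $JB$. After permuting so that the zero-row indices come first, the core indices second, and the zero-column indices last, idempotency $I^2 = I$ becomes a system of block equations. Writing $I$ in this $3\times 3$ block shape with the $(2,2)$ block equal to $J$ and the first block-column and last block-row forced to vanish by the zero-pattern analysis of step one, the remaining equations read off as $X = XJ$ and $Y = JY$ for the upper-right and middle-right blocks, together with a compatibility identity for the corner. Solving these, using $J = \on{diag}(J_i)$ with each $J_i = u_i v_i^t$ idempotent, lets me write the surviving blocks as $AJ$ and $JB$ for non-negative $A, B$ of the appropriate sizes, and the corner as $AJB$; the factorization is forced because $J$ acts as a projection onto the span of the $u_i$ and the block equations say exactly that the relevant blocks lie in the image of right- (respectively left-) multiplication by $J$.

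\textbf{Main obstacle.} I expect the genuinely delicate point to be the rank-one decomposition of the core in the second step, that is, proving that each irreducible non-negative idempotent block has rank one \emph{and} that the whole core is block diagonal (no off-diagonal entries can connect two distinct irreducible components of a non-negative idempotent). The subtlety is that irreducibility is about the combinatorial support, while idempotency is an algebraic constraint, and one must show these interact so that cross-component entries are forced to vanish; this requires combining the Perron--Frobenius strict positivity of the eigenvector with the non-negativity of all cross terms in $\sum_m J_{pm}J_{mq}$. The reductions in steps one and three are essentially bookkeeping with the idempotency relation once the correct coordinate grouping is fixed, so the real content is concentrated in the structure theory of the core.
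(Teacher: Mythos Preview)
The paper does not prove this theorem at all: it is quoted as a known classification result due to Flor, with the citation \cite{F69}, and is used as a black box in Sections~\ref{s4} and~\ref{s5}. There is therefore no ``paper's own proof'' to compare your proposal against; your outline is an attempt to reprove Flor's original result rather than anything the present paper undertakes.

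That said, a brief remark on your sketch. The broad architecture---separate indices by diagonal behaviour, isolate a core, decompose the core into irreducible rank-one blocks via Perron--Frobenius, then recover the off-diagonal blocks from the block equations coming from $I^2=I$---is indeed the shape of Flor's argument. One point to tighten: your identification of the core with the set of indices $p$ having $I_{pp}>0$ is not quite right as stated, since a rank-one non-negative idempotent $uv^{t}$ with $v^{t}u=1$ can have $u_pv_p=0$ for some $p$, so the middle block $J$ in the target form may itself contain zero diagonal entries. The correct trichotomy is by zero \emph{columns} (first group), zero \emph{rows} (last group), and the remainder (core), rather than by the sign of the diagonal; the diagonal observation $I_{pp}=0\Rightarrow I_{pm}I_{mp}=0$ is a tool within that argument, not the partition criterion itself. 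Your ordering of the groups is also reversed relative to the displayed block form (zero-column indices come first, zero-row indices last). These are fixable, and you have correctly located the substantive step in the Perron--Frobenius analysis of the core.
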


\begin{remark}
This theorem can be applied to quasi-idempotent (but not nilpotent) matrices as well. 
If $I^2 = \lambda I$ and $\lambda\neq 0$, then $(\frac{1}{\lambda}I)^2 = \frac{1}{\lambda^2}I^2 = 
\frac{1}{\lambda}I$. Hence $\frac{1}{\lambda}I$ is idempotent and thus can be described by the above theorem. 
\end{remark}

\section{$1$-morphisms act as projective functors}\label{s3}
In this section we generalize a result which states that the action of $1$-morphisms on a simple transitive $2$-representation 
sends non-zero objects to projective objects, moreover, the action is given by projective functors; cf.\ \cite[Theorem 11]{KMMZ}. 
The above statement is proved for fiat $2$-categories. We show that this result, under some mild assumptions, can be extended 
from the fiat case to finitary $2$-categories which fits our setup. Our proof 
follows closely the one provided in \cite{KMMZ} with a few minor adjustments.

\begin{theorem}\label{thmprojfunc}
Let $\mathbf{M}$ be a simple transitive $2$-representation of a finitary $2$-ca\-te\-go\-ry $\mathscr{D}$.
Let, further,  $\mathcal{I}$ be the apex of $\mathbf{M}$ and $\mathrm{F}\in \mathcal{I}$. 
Then the following holds:
\begin{enumerate}[$($i$)$]
  \item\label{thmprojfunc.1} For every object $X$ in any $\overline{\mathbf{M}}(\mathtt{i})$, 
    the object $\mathrm{F}\,X$ is projective. 
  \item\label{thmprojfunc.2} If $\overline{\mathbf{M}}(F)$ is additionally left exact, 
  then the functor $\overline{\mathbf{M}}(\mathrm{F})$ is a projective functor.
\end{enumerate}
\end{theorem}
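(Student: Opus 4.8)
The plan is to follow the strategy of \cite[Theorem 11]{KMMZ} while replacing any use of adjunction biadjointness (available in the fiat setting) by the weaker structural data we have. Throughout I work in the abelianization $\overline{\mathbf{M}}$, which by \cite[Theorem 11]{MM2} recovers $\mathbf{M}$ on projectives. The central observation is that $\mathcal{I}$ being the apex means it is an idempotent two-sided cell, so there exist $1$-morphisms $\mathrm{F},\mathrm{G},\mathrm{H}\in\mathcal{I}$ with $\mathrm{F}$ a direct summand of $\mathrm{G}\circ\mathrm{H}$; moreover, since $\mathcal{I}$ is not annihilated by $\mathbf{M}$ while everything strictly $\geq_J$-above it is, the action of $\mathrm{F}$ sends the simple objects of $\overline{\mathbf{M}}$ to objects built out of a controlled cell. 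The first step is therefore to record, using the idempotent structure of the apex and equation-level information about composition, a self-reproduction relation of the shape $\mathrm{F}\,X$ being a summand of $\mathrm{G}\mathrm{H}\,X$ that lets me bootstrap projectivity.

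\emph{Step one: reduce to simple objects.} Since every object of $\overline{\mathbf{M}}(\mathtt{i})$ has a finite filtration by simples and $\overline{\mathbf{M}}(\mathrm{F})$ is additive, it suffices to prove part \eqref{thmprojfunc.1} for $X=\hat{L}$ a simple object; projectivity of $\mathrm{F}$ applied to each composition factor plus the fact that a summand of a projective is projective will then, via the exactness hypothesis in part \eqref{thmprojfunc.2}, upgrade to the general statement. \emph{Step two: produce a projective lower bound.} Here I would argue that for some $1$-morphism $\mathrm{G}$ in the apex the object $\mathrm{G}\,\hat{L}$ has a nonzero projective summand: transitivity of $\mathbf{M}$ together with $\mathcal{I}$ being the apex forces the matrices $[\mathrm{G}]$ to be nonzero and, after normalizing the quasi-idempotent governing the two-sided cell, Theorem~\ref{Flor} pins down their block form. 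The nonnegative idempotent structure from Flor's theorem is exactly what guarantees that the action cannot produce a proper non-projective subobject that would survive.

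\emph{Step three: close the loop.} Using the idempotent relation $\mathrm{F}\mid \mathrm{G}\circ\mathrm{H}$ in $\mathcal{I}$, I evaluate on $\hat{L}$ to get that $\mathrm{F}\,\hat{L}$ is a summand of $\overline{\mathbf{M}}(\mathrm{G})\,\overline{\mathbf{M}}(\mathrm{H})\,\hat{L}$. The point is that $\overline{\mathbf{M}}(\mathrm{H})\,\hat{L}$ has a projective cover by the previous step, and applying $\overline{\mathbf{M}}(\mathrm{G})$ to a projective yields a projective provided $\overline{\mathbf{M}}(\mathrm{G})$ preserves projectivity, which is where I invoke that the left adjoint data identifying $[\mathrm{G}]^t=\llbracket\mathrm{G}^*\rrbracket$ (Subsection~\ref{s2.4}) lets me compare the action on projectives and on simples. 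Since $\mathrm{F}\,\hat{L}$ is a summand of a projective, it is itself projective, proving \eqref{thmprojfunc.1}. For \eqref{thmprojfunc.2}, once $\overline{\mathbf{M}}(\mathrm{F})$ is known to send simples (hence all objects, by the left-exactness plus additivity) to projectives, the standard characterization shows a left-exact functor sending the abelian category to its projectives and commuting with the relevant structure is isomorphic to tensoring with a projective bimodule, i.e.\ is a projective functor.

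The main obstacle is \emph{Step two}, establishing that the apex cannot act so as to create genuinely non-projective images. In \cite{KMMZ} this is where biadjointness in the fiat case does the heavy lifting, giving both adjunctions and hence exactness on both sides; in the present finitary setting I only have the one-sided adjunction recorded in Subsection~\ref{s2.4} and the left-exactness hypothesis in \eqref{thmprojfunc.2}. I expect the fix is to feed the combinatorial rigidity of Flor's normal form (Theorem~\ref{Flor}) into the argument in place of the missing adjunction: the rank-one idempotent blocks $J_i$ force the image of the action on the simple-object basis to factor through projective covers, so that no non-split non-projective part can occur. Verifying that this substitution is legitimate, and in particular that the ``mild assumptions'' alluded to in the section preamble are precisely what is needed to replace the second adjunction, is the delicate part of the proof.
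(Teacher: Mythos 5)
There is a genuine gap, and it sits exactly where you flag the ``delicate part'': Steps two and three do not establish projectivity of $\mathrm{F}\,X$. First, your reduction to simple objects in Step one already fails for part (i): without the left-exactness hypothesis of part (ii), the functor $\overline{\mathbf{M}}(\mathrm{F})$ is only right exact, so knowing that $\mathrm{F}\,\hat{L}$ is projective for every composition factor of $X$ says nothing about $\mathrm{F}\,X$ itself. Second, Theorem~\ref{Flor} cannot do the work you assign to it: it constrains the matrix $[\mathrm{G}]$, that is, the multiplicities of indecomposable projectives in $\mathbf{M}(\mathrm{G})\,Q$ for $Q$ projective, and is silent about the internal structure of $\overline{\mathbf{M}}(\mathrm{G})\,\hat{L}$ for $\hat{L}$ simple in the abelianization --- detecting projectivity there is precisely the problem to be solved, and no normal form of a non-negative idempotent matrix sees it. Third, Step three is circular: to conclude that $\mathrm{F}\,\hat{L}$ is a summand of a projective via $\mathrm{F}\mid\mathrm{G}\circ\mathrm{H}$ you need $\overline{\mathbf{M}}(\mathrm{H})\,\hat{L}$ to already be projective (every object ``has a projective cover'', but that is vacuous), which is the statement being proved.

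The paper's actual mechanism is different and worth contrasting with your plan. For an arbitrary object $X$ one takes minimal projective presentations $\mathcal{X}^{(i)}_{\bullet}$ of all $\mathrm{F}_i\,X$, where the $\mathrm{F}_i$ run over the indecomposable $1$-morphisms of the apex, and invokes \cite[Proposition~18]{KM2} to produce an idempotent $e=\sum_i c_i[\mathrm{F}_i]$ with strictly positive coefficients in the cell algebra. Acting with $e$ on the classes $v(j)$ of the terms of these presentations in the (complexified split) Grothendieck group, and using that applying a $1$-morphism to a projective presentation yields a possibly non-minimal projective presentation, one finds that $e(v(j))-v(j)$ is a non-negative combination of indecomposables; combining $e^2=e$ with the fact that, by transitivity, $e$ preserves non-negative combinations and does not annihilate non-zero ones forces $e(v(j))=v(j)$, i.e.\ the presentations $\mathrm{F}_i\mathcal{X}^{(l)}_{\bullet}$ remain minimal. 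Simple transitivity then kills the $\mathscr{D}$-invariant ideal generated by the differentials $\mathcal{X}^{(l)}_{1}\to\mathcal{X}^{(l)}_{0}$, so these maps are zero and $\mathrm{F}_l\,X\simeq\mathcal{X}^{(l)}_{0}$ is projective. This positivity-plus-minimality argument is the true replacement for the fiat biadjointness, not Flor's theorem. Your outline of part (ii) is fine: it is \cite[Lemma~13]{MM5} applied to a functor that is right exact by construction and left exact by assumption, hence exact.
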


\begin{proof}
Without loss of generality we may assume that $\mathcal{I}$ is the maximum
two-sided cell of $\mathscr{D}$.

Denote by $Q$ the complexification of the split Grothendieck group of
\begin{displaymath}
 \coprod_{\mathtt{i}\in\mathscr{D}}{\mathbf{M}}(\mathtt{i}). 
\end{displaymath}
Let $\mathbf{B}$ denote the distinguished basis in $Q$ given by classes of indecomposable modules.
Let $\mathrm{F}_1,\mathrm{F}_2,\dots,\mathrm{F}_k$ be a 
complete and irredundant list of indecomposable $1$-morphisms in $\mathcal{I}$. For $i=1,2,\dots,k$, 
let $\mathcal{X}^{(i)}_{\bullet}$ be a minimal projective presentation of $\mathrm{F}_i\, X$. Further,
for $j\geq 0$, denote by $v_j^{(i)}$ the image of $\mathcal{X}^{(i)}_{j}$ in $Q$.
Note that  $v_j^{(i)}$ is a linear combination of elements in $\mathbf{B}$
with non-negative integer coefficients.

As $\mathscr{D}$ is assumed to be idempotent, there are $i,j\in\{1,2,\dots,k\}$ such that 
$\mathrm{F}_i\circ \mathrm{F}_j\neq 0$. Therefore we may apply \cite[Proposition~18]{KM2}, 
which asserts that the algebra $A_{\mathscr{D}}$ has a unique idempotent $e$ of the form 

\begin{displaymath}
  \sum_{i=1}^kc_i[\mathrm{F}_i], 
\end{displaymath}

where all $c_i\in\mathbb{R}_{>0}$ and $[\mathrm{F}_i]$ denotes the image of $\mathrm{F}_i$ in $A_{\mathscr{D}}$.
The vector space $Q$ is, naturally, an $A_{\mathscr{D}}$-module. 

Let $j\geq 0$ and set

\begin{displaymath}
  v(j):=\sum_{i=1}^kc_iv_j^{(i)}.
\end{displaymath}

Applying any $1$-morphism $\mathrm{H}$ to a projective presentation of some object $Y$
and taking into account that the action of $\mathrm{H}$ is right exact, gives a
projective presentation of $\mathrm{H}\,Y$ which, a priori, does not have to be minimal.
By construction and minimality of $\mathcal{X}^{(i)}_{\bullet}$, this implies that  
$e(v(j))-v(j)$ is a  linear combination of elements in $\mathbf{B}$ with non-negative 
real coefficients. Note that transitivity of $\mathbf{M}$ implies that, for any linear combination
$z$ of elements in $\mathbf{B}$ with non-negative real coefficients, the element $ez$ is also
a linear combination of elements in $\mathbf{B}$ with non-negative real coefficients, moreover,
if $z\neq 0$, then $ez\neq 0$.
Therefore the equality $e^2=e$ yields $e(v(j))=v(j)$. Consequently, 
the projective presentation $\mathrm{F}_i\mathcal{X}^{(l)}_{\bullet}$ of $\mathrm{F}_i(\mathrm{F}_l\, X)$
is minimal, for all $i$ and $l$. 

Now the proof is completed by standard arguments as in \cite[Lemma~12]{MM5}. From the previous paragraph
it follows that homomorphisms $\mathcal{X}^{(l)}_{1}\to \mathcal{X}^{(l)}_{0}$,
for $l=1,2,\dots,k$, generate a $\cC$-invariant  ideal of $\mathbf{M}$ different from $\mathbf{M}$. 
Because of simple transitivity of $\mathbf{M}$, this ideal must be zero.
Therefore all homomorphisms $\mathcal{X}^{(l)}_{1}\to \mathcal{X}^{(l)}_{0}$ are zero which implies
$\mathcal{X}^{(l)}_{0}\simeq \mathrm{F}_l\, X$. Claim~\eqref{thmprojfunc.1} follows.
Claim~\eqref{thmprojfunc.2} follows from claim~\eqref{thmprojfunc.1} and \cite[Lemma~13]{MM5}. Indeed,
$F$ is right exact by construction and left exact by assumption, hence exact which allows us to apply
\cite[Lemma~13]{MM5}. This completes the proof.
\end{proof}

\begin{remark}\label{rmk:projfunc}
For any simple transitive $2$-representation $\mathbf{M}$ of our $2$-category $\cC$, Theorem~\ref{thmprojfunc} implies 
that all $1$-morphisms of $\cC$ send any object to a projective object. The $1$-morphism $F_{00}$ is self-adjoint and hence 
acts as a projective functor. Now, \cite[Lemma 8]{MZ1} yields that all $F_{i0}$ act as projective functors.
\end{remark}

\section{The case $n = 1$}\label{s4}
Before we study the general case, let us consider the case where $n = 1$. The aim of this section is to prove 
the following theorem:
\begin{theorem}
\label{SimpleTransitiveIsCell2x2}
 Let $\mathbf{M}$ be a simple transitive $2$-representation of $\cC_1$. Then $\mathbf{M}$ is equivalent to 
 a cell $2$-representation of $\cC$.
\end{theorem}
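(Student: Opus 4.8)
The plan is to start from the structural information already available for $\cC_1$: there are exactly two non-identity indecomposable $1$-morphisms $F_{00}$ and $F_{10}$, forming a single left cell $\l$ (which is also a two-sided cell) with two singleton right cells. By Remark~\ref{rmk:projfunc}, in any simple transitive $2$-representation $\mathbf{M}$ these $1$-morphisms act as projective functors, so $\mathbf{M}(\mathtt{i})\simeq B$-$\on{proj}$ and each $F_{i0}$ is given by tensoring with a projective $B$-$B$-bimodule. The first step is to pin down the rank $r$ of $\mathbf{M}$ together with the matrices $[F_{00}]$ and $[F_{10}]$. From \eqref{eq:1} we know that $\mathsf{M}=[F]$ satisfies $\mathsf{M}^2=(n+2)\mathsf{M}=3\mathsf{M}$ and is irreducible by transitivity; feeding this into the classification of such quasi-idempotent irreducible matrices from \cite[Lemma~4.3]{TZ} (via the Flor normal form in Theorem~\ref{Flor} applied to $\tfrac{1}{3}\mathsf{M}$) should force the combinatorics of the action into a very short list of possibilities.

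Next I would exploit Lemma~\ref{lem1}, which provides an ordering of the indecomposables with
\begin{displaymath}
[F_{00}]=\left(\begin{array}{c|c}2E&*\\\hline 0&0\end{array}\right).
\end{displaymath}
Combining this block form with the fact that $[F_{00}]$ is a rank-one-type quasi-idempotent (its own square is $2[F_{00}]$, matching $F_{00}\circ F_{00}\simeq F_{00}^{\oplus 2}$) should collapse the top-left block to a $1\times 1$ block, i.e.\ show that $F_{00}$ and $F_{10}$ each ``see'' a single indecomposable object as their essential image. The self-adjointness of $F_{00}$, recorded in Remark~\ref{rmk:projfunc}, lets me pass between $[F_{00}]$ and $\llbracket F_{00}\rrbracket$ using the adjunction identity $[F]^t=\llbracket G\rrbracket$ from Subsection~\ref{s2.4}; this rigidifies both the projective-level and simple-level matrices simultaneously and should rule out higher rank. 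I expect the outcome of this step to be that $r$ is forced to a specific small value and that the bimodules defining $F_{00},F_{10}$ are determined up to the natural ambiguity.

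With the numerical and bimodule data fixed, the final step is to identify $\mathbf{M}$ with a cell $2$-representation $\mathbf{C}_{\l}$. Here I would construct an explicit $2$-natural transformation from the relevant subquotient of the principal $2$-representation $\mathbf{P}_{\mathtt{i}}$ onto $\mathbf{M}$, using the Yoneda lemma \cite[Lemma~3]{MM3} to read off the morphism from the action of $F$ on a chosen object, and then check it descends to the simple transitive quotient $\mathbf{C}_{\l}$ and is an equivalence by comparing ranks and the matrices computed above. The main obstacle, I anticipate, is the bookkeeping in the middle step: showing that the quasi-idempotent matrix constraints, together with the left/right cell structure of $\l$ and the self-adjointness of $F_{00}$, genuinely pin down the algebra $B$ and the bimodules rather than merely their numerical shadows. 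The nilpotent-radical contribution hidden in the starred block of $[F_{00}]$, and verifying that no exotic $B$ with extra radical can mimic the required matrices, is precisely where a ``standard argument'' could conceal a subtle case distinction, so that is where I would concentrate the careful analysis before invoking the comparison with $\mathbf{C}_{\l}$.
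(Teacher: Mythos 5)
Your overall route coincides with the paper's: reduce to the matrix classification of \cite[Lemma~4.3]{TZ}, use Lemma~\ref{lem1}, Theorem~\ref{Flor} and the self-adjointness of $F_{00}$ to pin down $[F_{00}]$ and $[F_{10}]$, and then build the comparison map $\Phi\colon \mathbf{P}_{\mathtt{i}}\to\overline{\mathbf{M}}$, $\mathbb{1}_{\mathtt{i}}\mapsto\hat{L}_0$. However, as a proof the proposal has genuine gaps at exactly the points you flag but do not resolve. First, the list from \cite[Lemma~4.3]{TZ} contains two spurious solutions, $\mathsf{M}=(3)$ and $\mathsf{M}=\left(\begin{smallmatrix}2&2\\1&1\end{smallmatrix}\right)$, and both are fully consistent with every numerical constraint you invoke (quasi-idempotency of $[F_{00}]$ and $[F_{10}]$, the Flor normal form, Lemma~\ref{lem1}, and $[F_{00}]^{t}=\llbracket F_{00}\rrbracket$). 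They cannot be excluded by ``collapsing the top-left block''. The paper kills $(3)$ by observing that $\on{End}(F_{00})$ is the dual numbers $D$, so the $2$-subcategory generated by $\mathbb{1}_{\mathtt{i}}$ and $F_{00}$ is biequivalent to $\cC_{D}$ and $F_{10}$ would then have to act as a projective functor over $D$, contradicting $[F_{10}]=(1)$; and it kills $\left(\begin{smallmatrix}2&2\\1&1\end{smallmatrix}\right)$ by a module-theoretic computation showing $\on{dim}(\epsilon_0 B\epsilon_0)=2$, whence $F_{00}\circ F_{10}\simeq F_{00}$ forces $\overline{\mathbf{M}}(F_{10})Q_0\simeq Q_1^{\oplus 2}$, contradicting the shape of $[F_{10}]$. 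Neither argument is a routine consequence of the ingredients you list.

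Second, in the final step ``comparing ranks and the matrices'' is not sufficient to conclude that $\Phi$ induces an equivalence with the cell $2$-representation: the matrices $[F_{00}]$ and $[F_{10}]$ do not determine the Cartan matrix of $B$. The paper must separately prove $\on{dim}(\on{Hom}(Q_1,Q_1))=1$, and this uses simple transitivity in an essential way: a hypothetical submodule $N\subseteq Q_1$ whose Jordan--H\"older series contains only $\hat{L}_1$ would generate a proper non-zero $\cC$-stable ideal, because both $F_{00}$ and $F_{10}$ annihilate $\hat{L}_1$. This is precisely the ``exotic $B$ with extra radical'' scenario you correctly identify as the danger point, but excluding it requires this categorical input, not further matrix bookkeeping. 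Finally, you should also treat the degenerate case where the apex of $\mathbf{M}$ is $\mathcal{J}_e$, so that both $F_{i0}$ act by zero; there one factors out $\l$ and applies \cite[Theorem~18]{MM5} to identify $\mathbf{M}$ with the cell $2$-representation attached to $\mathcal{J}_e$. Your opening sentence silently assumes the apex is $\l$.
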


For this section we let $\mathbf{M}$ be a simple transitive $2$-representation of $\cC_1$ of rank $r$ and 
$\mathbf{M}(\mathtt{i}) \simeq B\text{-proj}$ where $B$ is as in Section \ref{s2.5}.

Moreover, we recall that, for $n = 1$, there are, up to isomorphism, $3$ distinct 1-morphisms in $\cC$, 
namely $\mathbb{1}_{\mathtt{i}}, F_{00}, F_{10}$; and the non-trivial part of the composition table looks as follows:
\begin{equation}
\label{eq:10}
\begin{array}{c||l|l}
\circ& F_{00} & F_{10}  \\
\hline
F_{00}	& F_{00} \oplus F_{00} & F_{00} \\
\hline
F_{10}	& F_{10} \oplus F_{10} & F_{10}
\end{array}
\end{equation}

\subsection{An analysis of the possible matrices}\label{s4.1}
The main goal of this section is to prove the following result for the matrix of $[F]$.
\begin{lemma}
\label{MatrixLemma2x2}
 Let $\mathbf{M}$ be a simple transitive $2$-representation of $\cC_1$ with apex $\l = \{F_{00}, F_{10}\}$. 
 Then the matrix $\mathsf{M}$ of the action of $F$ is given by
 \begin{align*}
  \mathsf{M} = \begin{pmatrix}
       2 & 1 \\
       2 & 1
      \end{pmatrix}.
 \end{align*}
 Moreover, the matrices $\mathsf{M}_0$ and $\mathsf{M}_1$ of the action of $F_{00}$ and $F_{10}$, respectively, are
 \begin{align*}
  \mathsf{M}_0 = \begin{pmatrix}
         2 & 1\\
         0 & 0
        \end{pmatrix}, \quad
  \mathsf{M}_1 = \begin{pmatrix}
         0 & 0 \\
         2 & 1
        \end{pmatrix}.
 \end{align*}
\end{lemma}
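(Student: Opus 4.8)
The plan is to read the four composition rules off \eqref{eq:10} as the matrix identities $\mathsf{M}_0^2=2\mathsf{M}_0$, $\mathsf{M}_0\mathsf{M}_1=\mathsf{M}_0$, $\mathsf{M}_1\mathsf{M}_0=2\mathsf{M}_1$, $\mathsf{M}_1^2=\mathsf{M}_1$, and to combine them with $\mathsf{M}=\mathsf{M}_0+\mathsf{M}_1$, the identity $\mathsf{M}^2=3\mathsf{M}$ and the irreducibility of $\mathsf{M}$. First I would apply \cite[Lemma~4.3]{TZ} (equivalently, Theorem~\ref{Flor} to the idempotent $\tfrac13\mathsf{M}$): an irreducible non-negative idempotent matrix has rank one, so $\mathsf{M}$ has rank one (and trace $3$). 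Multiplying through by $\mathsf{M}$ gives $\mathsf{M}_0\mathsf{M}=3\mathsf{M}_0$ and $\mathsf{M}_1\mathsf{M}=3\mathsf{M}_1$, so $\mathsf{M}_0$ and $\mathsf{M}_1$ are themselves of rank one, with rows lying in the one-dimensional row space of $\mathsf{M}$. Feeding this into the normal form of Lemma~\ref{lem1} forces the block $2E$ to be $1\times 1$, i.e.\ $s=1$ and $\mathsf{M}_0=\left(\begin{smallmatrix}2&*\\0&0\end{smallmatrix}\right)$, supported on the single row of one object, say $Q_0$; likewise $\mathsf{M}_1$ is supported on a single row, which (after reordering) I may take to be that of $Q_1$.

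Next I would use Remark~\ref{rmk:projfunc}, by which $F_{00}$ and $F_{10}$ act as projective functors, together with the fact that the matrix of $G_{ij}$ is supported on the row of $Q_i$. Writing $\mathbf{M}(F_{00})\cong\bigoplus_\alpha G_{0,j_\alpha}$, self-adjointness of $F_{00}$ gives $\llbracket F_{00}\rrbracket=\mathsf{M}_0^t$, which is supported on a single column; comparing with $\llbracket G_{0j}\rrbracket_{l,k}=\delta_{jk}\dim_\Bbbk\epsilon_lB\epsilon_0$ forces every summand to be $G_{00}$, so $\mathbf{M}(F_{00})\cong G_{00}^{\oplus N}$. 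Using $G_{00}\circ G_{1l}\cong G_{0l}^{\oplus\dim_\Bbbk\epsilon_0B\epsilon_1}$ and the relation $F_{00}\circ F_{10}\cong F_{00}$ then pins $\mathbf{M}(F_{10})\cong G_{10}$ together with $\dim_\Bbbk\epsilon_0B\epsilon_1=1$. Since $\mathsf{M}=\mathsf{M}_0+\mathsf{M}_1$ is now supported on the two rows of $Q_0,Q_1$ and must be irreducible, there is no room for further indecomposables, so $r=2$. Writing $d_0:=\dim_\Bbbk\epsilon_0B\epsilon_0$, the relation $F_{00}\circ F_{00}\cong F_{00}^{\oplus2}$ becomes $Nd_0=2$, and one reads off $\mathsf{M}_0=\left(\begin{smallmatrix}2&N\\0&0\end{smallmatrix}\right)$ and $\mathsf{M}_1=\left(\begin{smallmatrix}0&0\\d_0&1\end{smallmatrix}\right)$.

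The hard part will be to show $N=1$ (equivalently $d_0=2$), for this is exactly what separates the asserted matrices from the spurious transposed solution $\mathsf{M}_0=\left(\begin{smallmatrix}2&2\\0&0\end{smallmatrix}\right)$, $\mathsf{M}_1=\left(\begin{smallmatrix}0&0\\1&1\end{smallmatrix}\right)$; every identity used above is satisfied equally by $(N,d_0)=(1,2)$ and by $(2,1)$, so this step cannot be numerical and must use genuine $2$-representation theory. The condition $N=1$ is equivalent to $\mathbf{M}(F_{00})$ being indecomposable, and I would attack it through the $2$-endomorphism algebra of $F_{00}$. Since $\cC$ is $2$-full in $\cC_{\Lambda_1}$, one has $\operatorname{End}_{\cC}(F_{00})\cong e_0\Lambda_1e_0\otimes(e_0\Lambda_1e_0)^{\mathrm{op}}\cong\Bbbk[z]/(z^2)\otimes\Bbbk[z']/(z'^2)$, a commutative local algebra of dimension $4$, acting on $\mathbf{M}(F_{00})$ through $\mathbf{M}$. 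If, on the contrary, $N=2$ and $d_0=1$, then $\operatorname{End}(\mathbf{M}(F_{00}))\cong\operatorname{End}(G_{00}^{\oplus2})\cong M_2(\Bbbk)$, whose commutative subalgebras all have dimension at most $2$, so the image of $\operatorname{End}_{\cC}(F_{00})$ would collapse to dimension at most $2$. Ruling this collapse out is the crux: I expect to do it by exhibiting, from the self-adjunction unit and counit of $F_{00}$ (which realise the Frobenius structure on $\operatorname{End}_{\cC}(F_{00})$) together with simple transitivity of $\mathbf{M}$, that the action of $\operatorname{End}_{\cC}(F_{00})$ on $\mathbf{M}(F_{00})$ stays faithful enough to have image of dimension exceeding $2$; alternatively, the indecomposability of $\mathbf{M}(F_{00})$ should be deducible directly from \cite[Lemma~8]{MZ1}. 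Once $N=1$ is secured, the stated forms of $\mathsf{M}_0$ and $\mathsf{M}_1$ follow, and $\mathsf{M}=\mathsf{M}_0+\mathsf{M}_1=\left(\begin{smallmatrix}2&1\\2&1\end{smallmatrix}\right)$.
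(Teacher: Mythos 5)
Your reduction to the dichotomy $(N,d_0)=(1,2)$ versus $(N,d_0)=(2,1)$ is sound and runs parallel to the paper's own case analysis: your second case, together with its degeneration where $\mathsf{M}_0$ and $\mathsf{M}_1$ sit on the same row (which you do not explicitly address, but which folds into $N=2$, $d_0=1$ with $r=1$), is exactly the union of the paper's cases $\mathsf{M}=(3)$ and $\mathsf{M}=\left(\begin{smallmatrix}2&2\\1&1\end{smallmatrix}\right)$. You also correctly diagnose that no manipulation of the relations \eqref{eq:10} alone can separate the two cases. But the proof stops precisely at the decisive step: it is only announced (``I expect to do it by exhibiting\dots'', ``should be deducible''), not carried out, so the argument is incomplete. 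Moreover, the strategy you sketch for it is doubtful. Simple transitivity forbids proper nonzero $\cC$-invariant ideals of $\mathbf{M}$; it gives no a priori control over the kernel of $\on{End}_{\cC}(F_{00})\to\on{End}(\mathbf{M}(F_{00}))$, and there is no general principle forcing that image to have dimension greater than $2$ --- $2$-representations routinely kill large parts of $2$-morphism spaces. Also, \cite[Lemma~8]{MZ1} is invoked in Remark~\ref{rmk:projfunc} only to propagate the projective-functor property from $F_{00}$ to the remaining $F_{i0}$; it does not yield indecomposability of $\mathbf{M}(F_{00})$.

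The paper closes this gap by a different mechanism, which is the genuinely representation-theoretic input your outline lacks: pass to the abelianization and use the adjunction on simples. Self-adjointness of $F_{00}$ gives $\llbracket F_{00}\rrbracket=\mathsf{M}_0^{\mathrm{tr}}$, whence $\overline{\mathbf{M}}(F_{00})\hat{L}_1=0$, while $\overline{\mathbf{M}}(F_{00})\hat{L}_0$ is projective by Theorem~\ref{thmprojfunc}, hence a direct sum of copies of $Q_0$. Then
\begin{displaymath}
\on{dim}\on{Hom}(\hat{L}_1,\overline{\mathbf{M}}(F_{00})\hat{L}_0)=\on{dim}\on{Hom}(\overline{\mathbf{M}}(F_{00})\hat{L}_1,\hat{L}_0)=0
\end{displaymath}
shows that $\hat{L}_1$ cannot occur in the socle of $Q_0$. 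In your case $(N,d_0)=(2,1)$ one would have $\overline{\mathbf{M}}(F_{00})\hat{L}_0\simeq Q_0^{\oplus 2}$ with $Q_0$ of length two, top $\hat{L}_0$ and socle $\hat{L}_1$, which is exactly what this excludes; the paper then lands the final contradiction by computing $\overline{\mathbf{M}}(F_{10})Q_0\simeq Q_1^{\oplus\on{dim}(\epsilon_0B\epsilon_0)}$ against the corresponding entry of $\mathsf{M}_1$, and disposes of the $r=1$ case separately via the biequivalence of $\on{add}(\mathbb{1}_{\mathtt{i}},F_{00})$ with $\cC_D$ for $D$ the dual numbers. You should replace your speculative endomorphism-algebra argument with this socle computation.
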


\begin{proof} \label{ProofMatrixLemma2x2}
As mentioned above, the matrix $\mathsf{M} := [F]$ must be irreducible and all $\mathbf{M}(F_{i0})$ are non-zero, 
since the apex of $\mathbf{M}$ is $\l$ and $\mathbf{M}$ is simple transitive. Moreover, we have $\mathsf{M} = 3\mathsf{M}$, 
as can be read off from \eqref{eq:10}. From \cite[Lemma~4.3]{TZ} it follows that, up to a permutation of basis vectors, 
we have that $\mathsf{M}$ has to be contained in the following set of matrices:

\begin{align*}
 \mathsf{M} \in 
 \left\{(3), \begin{pmatrix}
               2 & 1\\
               2 & 1
              \end{pmatrix},
              \begin{pmatrix}
               2 & 2\\
               1 & 1
              \end{pmatrix},
              \begin{pmatrix}
               1 & 1 & 1\\
               1 & 1 & 1\\
               1 & 1 & 1
              \end{pmatrix}\right\}.
\end{align*}
Since $\mathsf{M} = \mathsf{M}_0 + \mathsf{M}_1$, Lemma \ref{lem1} immediately excludes the $3 \times 3$ case. 

Now let us assume that $\mathsf{M} = (3)$. In this case we have that $B$-$\on{proj}$ has only one projective indecomposable $B\epsilon_0$. 
By Remark \ref{rmk:projfunc}, we have that $F_{00}$ and $F_{10}$
act as projective functors, i.e.,
\[
 \mathbf{M}(F_{00}) = G_{00}^{\oplus a}, \quad \mathbf{M}(F_{10}) = G_{00}^{\oplus b}.
\]
From Lemma \ref{lem1}, we get that $[F_{00}] = (2)$ and, consequently, $[F_{10}] = (1)$. Consider the $2$-full $2$-subcategory $\mathscr{D}$ of 
$\mathscr{C}$ generated by the additive closure of $\mathbb{1}_{\mathtt{i}}$ and $F_{00}$ and the restriction of the action of 
$\mathbf{M}$ to $\mathscr{D}$. Since the endomorphism algebra of $F_{00}$ is isomorphic to the dual numbers $D := \Bbbk[x] \slash (x^2)$, it follows that 
$\mathscr{D}$ is biequivalent to $\cC_{D}$. This means that $F_{10}$ has to act as a projective functor 
over the dual numbers which contradicts the fact that $[F_{10}] = (1)$. Hence, this case is not possible.

Next, we assume that $\mathsf{M} = \begin{pmatrix}2 & 2\\1 & 1\end{pmatrix}$. By construction, we have that 
$F = F_{00} \oplus F_{10}$ and thus $\mathsf{M} = [F] = [F_{00}] + [F_{10}]$. 
Moreover, by Lemma \ref{lem1} and Theorem \ref{Flor} we have that $\mathsf{M}_0 = [F_{00}]$ has to be of the form
\[
 \mathsf{M}_0 =\begin{pmatrix}
          2 & a\\
          0 & 0
         \end{pmatrix}
\]
for some $a$, which implies that
\[
 \mathsf{M}_1 = \begin{pmatrix}
          0 & b\\
          1 & 1
         \end{pmatrix},
\]
for some $b$. Moreover, since $F_{10}$ is an idempotent $1$-morphism it follows that $b = 0$ and $a = 2$. Therefore
\[
 \mathsf{M}_0 = \begin{pmatrix} 2 & 2\\0 & 0\end{pmatrix}, \quad \mathsf{M}_1 = \begin{pmatrix} 0 & 0\\1 & 1\end{pmatrix}.
\]

From this we can see that $F_{00}$ sends both $Q_0$ and $Q_1$ to two copies of $Q_0$ and $F_{10}$ 
sends both projectives to $Q_1$, i.e.,
\begin{align*}
 \overline{\mathbf{M}}(F_{00}) = G_{00}^{\oplus a_{00}} \oplus G_{01}^{\oplus a_{01}}, \quad
 \overline{\mathbf{M}}(F_{10}) = G_{10}^{\oplus a_{10}} \oplus G_{11}^{\oplus a_{11}}. 
\end{align*}

Now, since $F_{00}$ is self-adjoint, we have that $\mathsf{M}_0^{\text{tr}} = \begin{pmatrix} 2 & 0\\ 2 & 0\end{pmatrix}$ 
is the matrix of the action of $F_{00}$ in the basis of simples, as was proved in \cite[Lemma 10]{MM5}. 
Since we know the matrix of $F_{00}$ in the basis of simples, we obtain that $\overline{\mathbf{M}}(F_{00})$ annihilates 
$\hat{L}_1$ which implies that $a_{01} = 0$ because 
\[
  G_{01}^{\oplus a_{01}}\hat{L}_1 = \left(B\epsilon_0 \otimes \epsilon_1B\right)^{\oplus a_{01}}\otimes_B \hat{L}_1 
  \simeq B\epsilon_0^{\oplus a_{01}}.
\]

Additionally, we know that 
\[
 \overline{\mathbf{M}}(F_{10})^{\oplus 2}\hat{L}_1 = \overline{\mathbf{M}}(F_{10}\circ F_{00})\hat{L}_1 
 = \overline{\mathbf{M}}(F_{10})\circ \overline{\mathbf{M}}(F_{00})\hat{L}_1 = 0.
\]
Since $\hat{L}_1$ is a subquotient of $Q_1$ and the latter is a summand of $\overline{\mathbf{M}}(F_{00})\hat{L}_1$, we 
obtain that $F_{10}$ also annihilates $\hat{L}_1$. Therefore, we have 
\begin{align*}
 \overline{\mathbf{M}}(F_{00}) = G_{00}^{\oplus a_{00}}, \quad
 \overline{\mathbf{M}}(F_{10}) = G_{10}^{\oplus a_{10}}. 
\end{align*}

Furthermore, we know by Theorem \ref{thmprojfunc} that $\overline{\mathbf{M}}(F_{00})\hat{L}_0$ is projective 
which means it is a direct sum of a number of copies of $Q_0$. As $\overline{\mathbf{M}}(F_{00})$ annihilates 
$\hat{L}_1$ and is self-adjoint, we obtain that 
\[
 \on{dim}(\on{Hom}(\hat{L}_1, \overline{\mathbf{M}}(F_{00})\hat{L}_0)) = \on{dim}(\on{Hom}(\overline{\mathbf{M}}(F_{00})
 \hat{L}_1, \hat{L}_0)) = 0
\]
and thus $\hat{L}_1$ cannot appear in the socle of $\mathbf{M}(F_{00})\hat{L}_0$. By the above, $\hat{L}_1$
does not appear in the top of $\mathbf{M}(F_{00})\hat{L}_0$ either as $\mathbf{M}(F_{00})\hat{L}_0 \simeq Q_0^{\oplus a_{00}}$. 
On the other hand, from $\mathsf{M}_0^{\text{tr}}$ it follows that $\hat{L}_0$ has multiplicity $2$ in 
the Jordan-H\"{o}lder series of $\overline{\mathbf{M}}(F_{00})\hat{L}_0$ which means that $a_{00}$ is either equal 
to $1$ or $2$. If $a_{00} = 2$, then both $\hat{L}_0$ and $\hat{L}_1$ have multiplicity $1$ in $Q_0$. This means 
that the socle of $Q_0$ has to be isomorphic to $\hat{L}_1$ which, by the above, cannot be 
the case. It follows that $a_{00} = 1$. Therefore $\overline{\mathbf{M}}(F_{00})\hat{L}_0$ has to be isomorphic 
to $Q_0$ and has simple top and socle isomorphic to $\hat{L}_0$. This also yields that
\begin{displaymath}
  \on{dim}(\epsilon_0B\epsilon_0) = \on{dim}(\on{Hom}(Q_0, Q_0)) = 2. 
\end{displaymath}
Now the composition of $\overline{\mathbf{M}}(F_{00})$ and $\overline{\mathbf{M}}(F_{10})$ gives us that
\begin{align*}
 G_{00} = \overline{\mathbf{M}}(F_{00}) & = \overline{\mathbf{M}}(F_{00} \circ F_{10}) = 
 \overline{\mathbf{M}}(F_{00}) \circ \overline{\mathbf{M}}(F_{10}) = G_{00} \circ G_{10}^{\oplus a_{10}} \\
 & = G_{00}^{\oplus a_{10} \cdot \on{dim}(\epsilon_0B\epsilon_1)}.
\end{align*}
This yields that $a_{01} = \on{dim}(\epsilon_0B\epsilon_1) = 1$; in other words, we have 
$\overline{\mathbf{M}}(F_{10}) = G_{10}$. However, we know that the projective $Q_0$ is 
mapped to $Q_1$ by $\overline{\mathbf{M}}(F_{10})$ but 
\[
 G_{10}Q_0 = B\epsilon_1 \otimes \epsilon_0B \otimes_B B \epsilon_0 \simeq 
 (B\epsilon_1)^{\oplus \on{dim}(\epsilon_0B\epsilon_0)} =  Q_1^{\oplus 2},
\]
which is a contradiction.

We have thus proved that $\mathsf{M} = \begin{pmatrix} 2 & 1 \\ 2 & 1\end{pmatrix}$ is the only possibility 
and one checks easily that this is realized by the cell $2$-representation $\mathbf{C}_{\l}$. By Lemma~\ref{lem1}
and Theorem~\ref{Flor} we have that $\mathsf{M}_0 = \begin{pmatrix} 2 & a \\ 0 & 0\end{pmatrix}$
and thus $\mathsf{M}_1 = \begin{pmatrix} 0 & b \\ 2 & 1\end{pmatrix}$. Again, the fact that $\mathsf{M}_1$ 
has to be idempotent yields $b = 0$ and thus $a = 1$. Therefore
\begin{align*}
 \mathsf{M}_0  = \begin{pmatrix} 2 & 1\\0 & 0\end{pmatrix}, \quad
 \mathsf{M}_1  = \begin{pmatrix} 0 & 0\\2 & 1\end{pmatrix}.
\end{align*}
\end{proof}

\subsection{Proof of Theorem \ref{SimpleTransitiveIsCell2x2}}\label{s4.2}
Now, we are ready to prove Theorem \ref{SimpleTransitiveIsCell2x2}. 

Let $\mathbf{M}$ be a simple transitive $2$-representation of $\cC_1$. If $\mathbf{M}(F_{i0}) = 0$ for 
either $i = 0$ or $i = 1$, then the composition table implies that both $F_{00}$ and $F_{10}$ act as $0$ and thus 
$\mathcal{J}_e$ is the apex of $\mathbf{M}$. If we factor out the other cell, we obtain a 
$2$-representation of a $2$-category with only one $1$-morphism, and thus, in this case, $\mathbf{M}$ has to 
be equivalent to a cell $2$-representation by \cite[Theorem~$18$]{MM5}. Therefore, we may assume that 
$\mathbf{M}$ is $1$-faithful, i.e.,\ that all $\mathbf{M}(F_{i0})$ are non-zero.

Now consider $\overline{\mathbf{M}}$. Then the matrices $\mathsf{M}_0$ and $\mathsf{M}_1$ are given by 
Lemma~\ref{MatrixLemma2x2}. From these matrices it follows that $F_{00}$ annihilates $\hat{L}_1$. Since 
$F_{10} \oplus F_{10} \simeq F_{10} \circ F_{00}$, it follows that $F_{10}$ annihilates $\hat{L}_1$ as well. 
Moreover, by self-adjointness of $F_{00}$, we know that the action of $F_{00}$ in the basis of simples is given by 
\begin{displaymath}
 \mathsf{M}_0^{\text{tr}} = \begin{pmatrix} 2 & 0\\ 1 & 0 \end{pmatrix}.
\end{displaymath}

The projective module $\overline{\mathbf{M}}(F_{00})\hat{L}_0$ is isomorphic to a direct sum of a number of copies of $Q_0$.
As $\overline{\mathbf{M}}(F_{00})\hat{L}_0$ has the simple subquotient $\hat{L}_1$ with multiplicity $1$, it follows that
$\overline{\mathbf{M}}(F_{00})\hat{L}_0 \simeq Q_0$. 

Finally, we have that 
\begin{align*}
 (\overline{\mathbf{M}}(F_{10})\hat{L}_0)^{\oplus 2} = \overline{\mathbf{M}}(F_{10}\circ F_{00}) \hat{L}_0 = 
 \overline{\mathbf{M}}(F_{10}) Q_0 = Q_1^{\oplus 2},
\end{align*}
and thus we obtain
\begin{IEEEeqnarray*}{c;c;c;c;c;c;c}
 \overline{\mathbf{M}}(F_{00})\hat{L}_0 & \simeq & Q_0, & \quad & \overline{\mathbf{M}}(F_{10})\hat{L}_0 & \simeq &  Q_1, \\
 \overline{\mathbf{M}}(F_{00})\hat{L}_1 & \simeq & 0, & \quad & \overline{\mathbf{M}}(F_{10})\hat{L}_1 & \simeq & 0.
\end{IEEEeqnarray*}

Now we define the following map 
\begin{align*}
 \Phi: \mathbf{P}_{\mathtt{i}} &\to \overline{\mathbf{M}},\\
 \mathbb{1}_{\mathtt{i}} & \mapsto \hat{L}_0.
\end{align*}
From the above analysis we see that $\Phi$ maps $\mathbf{N} = \on{add}(\setof{F}{F \in \mathcal{L}})$ to the $2$-re\-pre\-sen\-ta\-ti\-on 
$\overline{\mathbf{M}}_{\text{proj}}$ of $\cC_1$ given by projective objects in $\overline{\mathbf{M}}$. Note that $\overline{\mathbf{M}}_{\text{proj}}$ 
is equivalent to $\mathbf{M}$. We even have a bijection between the isomorphism classes of indecomposable objects in $\mathbf{N}$
and in $\overline{\mathbf{M}}_{\text{proj}}$. To complete the proof, we need to show that $\Phi$ induces isomorphisms
between the homomorphism spaces of the corresponding indecomposable objects. For this it is enough to show that every simple 
transitive $2$-representation of $\cC$ has the same Cartan matrix. 

From our analysis of $\overline{\mathbf{M}}(F_{00})\hat{L}_0 = Q_0$ we know that 
\begin{align*}
 \on{dim}(\operatorname{Hom}(Q_0,Q_0)) = 2, \quad \on{dim}(\operatorname{Hom}(Q_1,Q_0)) = 1,
\end{align*}
Moreover, using the fact that $F_{00}$ is self-adjoint, we can deduce that 
\begin{align*}
 \on{dim}(\operatorname{Hom}(Q_0,Q_1)) & = \on{dim}(\operatorname{Hom}(F_{00}\hat{L}_0,Q_1)) \\
 & = \on{dim}(\operatorname{Hom}(\hat{L}_0,F_{00}Q_1)) \\
 &= \on{dim}(\operatorname{Hom}(\hat{L}_0, Q_0)) = 1.
\end{align*}

Now consider $\on{dim}(\operatorname{Hom}(Q_1,Q_1)) \geq 1$. From the fact that 
$\on{dim}(\on{Hom}(Q_0,Q_1)) = 1$ we obtain that $\hat{L}_0$ has multiplicity $1$ in the Jordan-H\"{o}lder 
series of $Q_1$. Therefore there is an extension from $\hat{L}_1$ to $\hat{L}_0$. Let $K$ be a corresponding
indecomposable module with top $\hat{L}_1$ and socle $\hat{L}_0$. Then we have a short exact sequence
\begin{displaymath}
 0 \to N \to Q_1 \to K \to 0,
\end{displaymath}
where the Jordan-H\"{o}lder series of $N$ only contains $\hat{L}_1$. However, as both $F_{00}$ and $F_{10}$ 
annihilate $\hat{L}_1$, the module $N$ 
generates a $\cC$-stable ideal of $\overline{\mathbf{M}}_{\on{proj}}(\mathtt{i})$, not containing any identity $2$-morphisms.
This contradicts the fact that $\mathbf{M}$ is simple transitive. 
Consequently, we get
\begin{align*}
 \on{dim}(\operatorname{Hom}(Q_1,Q_1)) = 1,
\end{align*}
and hence the Cartan matrix of $\mathbf{M}$ in the basis $Q_0, Q_1$ equals 
\begin{align*}
 \begin{pmatrix}
  2 & 1 \\
  1 & 1
 \end{pmatrix}.
\end{align*}
This coincides with the Cartan matrix of the cell $2$-representation and implies that $\Phi$ induces an equivalence
from the cell $2$-representation to $\mathbf{M}$. This concludes the proof. 

\section{The general case}\label{s5}
Let $n \geq 1$. In the general case we were not able to classify all simple transitive $2$-representations of $\cC_n$
but we believe that the following is true:
\begin{conjecture}
\label{GeneralCaseConjecture}
 Equivalence classes of simple transitive $2$-representations of $\cC_n$ are in bijection with set partitions of $\{1, 2, \ldots, n\}$. 
\end{conjecture}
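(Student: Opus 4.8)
The plan is to establish the bijection in the two expected directions: attach to every simple transitive $2$-representation $\mathbf{M}$ a set partition $\pi(\mathbf{M})$ of $\{1,\dots,n\}$ that is invariant under equivalence, and conversely realise each partition by an explicit representation $\mathbf{M}_\pi$ with $\pi(\mathbf{M}_\pi)=\pi$. The non-$1$-faithful case collapses to the cell of $\mathcal{J}_e$ exactly as in the proof of Theorem~\ref{SimpleTransitiveIsCell2x2}, so I would assume throughout that $\mathbf{M}$ is $1$-faithful with apex $\l$.

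To extract the invariant, note first that by Remark~\ref{rmk:projfunc} all $F_{i0}$ act as projective functors. Since $\mathsf{M}=[F]$ is irreducible with $\mathsf{M}^2=(n+2)\mathsf{M}$ (read off from \eqref{eq:1}), its eigenvalues lie in $\{0,n+2\}$, and Perron--Frobenius forces $\on{rank}\mathsf{M}=1$. The same analysis as in the proof of Lemma~\ref{MatrixLemma2x2} (combining Lemma~\ref{lem1}, the quasi-idempotency $\mathsf{M}_0^2=2\mathsf{M}_0$ and self-adjointness of $F_{00}$) singles out one indecomposable projective $Q_0$ and one simple $\hat{L}_0$ with $\overline{\mathbf{M}}(F_{00})\hat{L}_0\simeq Q_0$, while $F_{00}$ annihilates every other simple. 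Using $F_{i0}\circ F_{00}\simeq F_{i0}^{\oplus 2}$ one then shows that every $F_{i0}$ likewise annihilates all simples except $\hat{L}_0$ and that $\overline{\mathbf{M}}(F_{i0})\hat{L}_0$ is a single indecomposable projective $Q_{\sigma(i)}$. Setting $i\sim j$ whenever $Q_{\sigma(i)}\simeq Q_{\sigma(j)}$ defines $\pi(\mathbf{M})$, and this relation is manifestly preserved by any equivalence of $2$-representations.

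Going the other way, for a partition $\pi$ with blocks $B_1,\dots,B_k$ I would build $\mathbf{M}_\pi$ of rank $k+1$ over an algebra $B_\pi$ with simples $\hat{L}_0,\hat{L}_1,\dots,\hat{L}_k$, whose projective $Q_0$ is self-dual with heart $\bigoplus_{s}\hat{L}_s^{\oplus m_s}$ and whose remaining projectives are calibrated (by a choice of multiplicities $m_s$) so that $F=\bigoplus F_{i0}$ acts by a rank-one matrix; the functors $F_{i0}$ with $i\in B_s$ are all declared equal to the projective functor $Q_s\otimes \epsilon_0 B_\pi\otimes_{B_\pi}{}_-$. The finest partition must reproduce the cell $2$-representation $\mathbf{C}_{\l}$ of rank $n+1$, which anchors the family. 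One then has to verify that $\mathbf{M}_\pi$ is a genuine strict $2$-representation, i.e.\ that these functors satisfy the relations \eqref{eq:1}, and that it is simple transitive, checking generation of every nonzero object and the absence of proper invariant ideals as in Section~\ref{s4.2}. Distinct partitions yield inequivalent representations because $\pi(\mathbf{M}_\pi)=\pi$: ranks separate partitions with different numbers of blocks, and the fusion relation $\sim$ recovers the blocks themselves; in particular the evident $S_n$-symmetry of $\cC_n$ permuting $F_{10},\dots,F_{n0}$ twists $\mathbf{M}_\pi$ into $\mathbf{M}_{\tau\pi}$, which is therefore a genuinely different equivalence class.

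The hard part will be closing this loop in both directions, and the obstruction in each is that $\cC_n$ is not fiat: among the $F_{i0}$ only $F_{00}$ admits an adjoint, so the coalgebra/comodule technology and the general existence and classification theorems of \cite{MM5, MMZ} are unavailable, and one cannot simply transport the $n=1$ argument of Section~\ref{s4.2}. That argument pinned down the entire Cartan matrix from self-adjointness of $F_{00}$ together with the minuscule rank; for $k\geq 2$ the analogous constraints determine the $Q_0$-column and the self-adjoint block but leave the spaces $\on{Hom}(Q_s,Q_t)$ for $s,t\geq 1$ underdetermined, so \emph{a priori} there may be exotic algebras $B$ carrying the same numerical datum $\pi$. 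Excluding these — equivalently, proving the rigidity statement that a simple transitive $\mathbf{M}$ with $\pi(\mathbf{M})=\pi$ is forced to be $\mathbf{M}_\pi$, and that $B_\pi$ and its bimodules are reconstructed uniquely from $\pi$ — is precisely the gap that at present keeps the statement a conjecture; the numerical invariants of Section~\ref{s5} are the partial evidence that the classification should nonetheless be governed exactly by set partitions.
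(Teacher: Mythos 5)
You have written a proof strategy rather than a proof, and you say so yourself; the honest assessment is that the statement is a \emph{conjecture} which the paper also does not prove. Your first direction --- extracting from a $1$-faithful simple transitive $\mathbf{M}$ the set partition recorded by which indecomposable projective each $F_{i0}$ hits --- is essentially the content of the paper's Theorem~\ref{MatrixThmGeneral}: there the matrices $[F_{i0}]$ are pinned down to the form \eqref{eq:MatrixCase1}, governed by a surjection $\varphi\colon\{1,\dots,n\}\to\{2,\dots,r\}$ whose fibres are exactly your blocks. Your sketch of this direction elides two points the paper has to work for: that no $F_{i0}$ with $i\geq 1$ can share its nonzero row with $F_{00}$ (the exclusion of the second family \eqref{eq:matsecondcase}, obtained from a contradiction with $\on{dim}(\epsilon_0B\epsilon_0)=2$), and that $\varphi$ is surjective onto $\{2,\dots,r\}$, which comes from irreducibility of $[F]$.

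The genuine gap is the converse direction, and it is the same gap that keeps the statement a conjecture in the paper. You ``declare'' the functors $F_{i0}$ for $i\in B_s$ to be a fixed projective functor over an algebra $B_\pi$ ``calibrated'' so that \eqref{eq:1} holds, but you never produce $B_\pi$, never exhibit the bimodule isomorphisms realizing \eqref{eq:1} coherently (a $2$-representation is strictly more data than its decategorified matrices), and never prove simple transitivity or inequivalence of the $\mathbf{M}_\pi$ beyond rank counting and an appeal to an invariant whose well-definedness depends on the unconstructed objects. Subsection~\ref{s5.3} of the paper records exactly this obstruction: the natural candidates --- the functor $F_\varphi\colon\mathcal{A}_n\to\mathcal{A}_k$ between path categories, or the algebra homomorphism from $\Lambda_k$ to $\Lambda_n$ sending a primitive idempotent to the sum of the idempotents in its preimage --- induce functors between the module categories that fail to preserve projectives, and no repair is known. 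You are right that the rigidity machinery of \cite{MM5} and \cite{MMZ} is unavailable because $\cC_n$ is not fiat, and right to flag that the Cartan data for $r\geq 3$ is underdetermined; but identifying the gap is not the same as closing it, so this cannot be accepted as a proof of the bijection. As a road map it is consistent with the paper's evidence and its stated speculations.
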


In the remainder of the section, we present the evidence which we have for this conjecture which will also explain the content
of the conjecture in more detail.
\subsection{An analysis of the possible matrices}\label{s5.1}
The goal of this section is to prove an analogue of Lemma \ref{MatrixLemma2x2}. First, recall that
\[
 F := \bigoplus_{i=0}^n F_{i0}.
\]
Moreover, let $\mathsf{M} = [F]$ be the matrix of the action of $F$, and $\mathsf{M}_i = [F_{i0}]$ the matrix of the 
action of $F_{i0}$. 
\begin{theorem}
\label{MatrixThmGeneral}
Let $\mathbf{M}$ be a simple transitive $2$-representation of $\cC$ of rank $r$ with apex $\l$. Then there exists an ordering
of the isomorphism classes of indecomposable objects in $\mathbf{M}(\mathtt{i})$ and a surjective function $\varphi: \{1, 2, \ldots, n\}
\to \{2, 3, \ldots, r\}$ such that 
\begin{align}
\label{eq:MatrixCase1}
 \mathsf{M}_0 = \begin{pmatrix}
              2 & 1 & \cdots & 1\\
              0 & 0 & \cdots & 0\\
              \vdots & \vdots & \ddots & \vdots \\
              0 & 0 & \cdots & 0
             \end{pmatrix},  \quad
  \mathsf{M}_i = \begin{pmatrix}
              0 & 0 & \cdots & 0\\
              \vdots & \vdots &\ddots & \vdots \\
              0 & 0 & \cdots & 0\\
              2 & 1 & \cdots & 1 \\
              0 & 0 & \cdots & 0\\
              \vdots & \vdots & \ddots & \vdots \\
              0 & 0 & \cdots & 0
             \end{pmatrix}, 
\end{align}
where the index of the non-zero row of $\mathsf{M}_i$ is $\varphi(i)$.
\end{theorem}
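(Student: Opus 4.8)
**The plan is to mirror the structure of the $n=1$ analysis (Lemma~\ref{MatrixLemma2x2}), but now extract the uniform block shape of $\mathsf{M}_0$ and each $\mathsf{M}_i$ from the idempotency and adjointness constraints, together with Lemma~\ref{lem1} and Flor's theorem.** First I would record the two structural facts we already have. By Lemma~\ref{lem1}, after a suitable reordering we may assume
\begin{displaymath}
\mathsf{M}_0 = \left(\begin{array}{c|c}2E & * \\ \hline 0 & 0\end{array}\right).
\end{displaymath}
By Remark~\ref{rmk:projfunc} every $F_{i0}$ acts as a projective functor, and from the composition table \eqref{eq:1} each $F_{i0}$ is an idempotent $1$-morphism ($F_{i0}\circ F_{i0}\simeq F_{i0}^{\oplus 2}$, so $\tfrac12\mathsf{M}_i$ is idempotent). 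Thus Theorem~\ref{Flor} (via the Remark on quasi-idempotents) applies to each $\mathsf{M}_i$, giving a permuted block form whose nonzero part is supported on rows and columns of rank equal to $\on{rank}(\mathsf{M}_i)$.

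The first key step is to pin down the rank of $\mathsf{M}_0$. Self-adjointness of $F_{00}$ gives $\mathsf{M}_0^{\mathrm{tr}}=\llbracket F_{00}\rrbracket$, and the projective-functor form $\overline{\mathbf{M}}(F_{00})=\bigoplus_{s,t}G_{st}^{\oplus a_{st}}$ means $\mathsf{M}_0$ is a sum of rank-one matrices $[G_{st}]$. Applying Theorem~\ref{thmprojfunc} together with the argument used in the proof of Lemma~\ref{MatrixLemma2x2} — namely that $\overline{\mathbf{M}}(F_{00})\hat L_j$ is projective, that $\hat L_j$ cannot appear in the socle of $\overline{\mathbf{M}}(F_{00})\hat L_0$ for the relevant indices, and that the Loewy data force $a_{00}=1$ — I would show that $\mathsf{M}_0$ has rank exactly one. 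Then the top block in Lemma~\ref{lem1} must be a single row, so
\begin{displaymath}
\mathsf{M}_0 = \begin{pmatrix} 2 & 1 & \cdots & 1 \\ 0 & 0 & \cdots & 0 \\ \vdots & \vdots & \ddots & \vdots \\ 0 & 0 & \cdots & 0\end{pmatrix},
\end{displaymath}
the off-diagonal entries all being $1$ by the same idempotency computation (a $2$ in the $(0,j)$ slot would force an extra summand, contradicting $\tfrac12\mathsf{M}_0$ being idempotent of rank one).

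Next I would treat each $\mathsf{M}_i$ for $i\geq 1$. From \eqref{eq:1} we have $F_{i0}\circ F_{00}\simeq F_{i0}$, hence $\mathsf{M}_i\mathsf{M}_0=\mathsf{M}_i$ (up to the scalar read off from the table), which forces the column-support of $\mathsf{M}_i$ to match that of $\mathsf{M}_0$; combined with $F_{00}\circ F_{i0}\simeq F_{00}$ this pins the nonzero content of $\mathsf{M}_i$ to a single row with the same $(2,1,\dots,1)$ profile as $\mathsf{M}_0$. Flor's theorem guarantees $\mathsf{M}_i$ has rank one, so there is a unique nonzero row, whose index I define to be $\varphi(i)\in\{2,\dots,r\}$ (it cannot be the top index $0$, since that row is already occupied by $F_{00}$ and the right cells of the $F_{i0}$ are distinct). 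Summing, $\mathsf{M}=\mathsf{M}_0+\sum_{i\geq1}\mathsf{M}_i$ must be irreducible by transitivity, which forces every index in $\{2,\dots,r\}$ to occur as some $\varphi(i)$ — that is, $\varphi$ is surjective; otherwise an unused row yields an invariant block contradicting irreducibility of $\mathsf{M}$.

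**The main obstacle I expect is the rank-one argument for $\mathsf{M}_0$ in the general $r$ setting.** For $n=1$ the proof of Lemma~\ref{MatrixLemma2x2} could lean on the tiny $2\times2$ case analysis and the explicit classification \cite[Lemma~4.3]{TZ}; here $r$ is unbounded, so I must argue abstractly that $a_{00}=1$ and that no second indecomposable projective can appear in $\overline{\mathbf{M}}(F_{00})\hat L_0$. This requires carefully combining the self-adjointness identity $\dim\on{Hom}(\hat L_j,\overline{\mathbf{M}}(F_{00})\hat L_0)=\dim\on{Hom}(\overline{\mathbf{M}}(F_{00})\hat L_j,\hat L_0)$ with the fact that $\overline{\mathbf{M}}(F_{00})$ annihilates every $\hat L_j$ for $j\neq0$ (deduced exactly as in the $n=1$ proof from $F_{i0}\circ F_{00}\simeq F_{i0}$), so that $\hat L_j$ appears neither in the socle nor the top of the projective $\overline{\mathbf{M}}(F_{00})\hat L_0\simeq Q_0^{\oplus a_{00}}$; the Loewy-length-three structure of $B$-projectives inherited from the analysis then squeezes $a_{00}$ down to $1$. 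Once this is in place the block shapes \eqref{eq:MatrixCase1} and surjectivity of $\varphi$ follow by the routine bookkeeping sketched above.
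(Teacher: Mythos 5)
Your proposal has genuine gaps at precisely the two places where the paper's proof has to do real work. First, you attribute the rank-one property of each $\mathsf{M}_i$ to Flor's theorem, but Theorem~\ref{Flor} describes non-negative idempotents of \emph{arbitrary} rank and gives no rank bound. The paper instead obtains $\mathrm{rank}(\mathsf{M}_j)=1$ for every $j$ from a trace count: by \cite[Proposition~4.1]{TZ} the positive irreducible integral matrix $\mathsf{M}$ with $\mathsf{M}^2=(n+2)\mathsf{M}$ has trace exactly $n+2$, while each quasi-idempotent summand satisfies $\mathrm{tr}(\mathsf{M}_j)=a_j\,\mathrm{rank}(\mathsf{M}_j)$, so $n+2=2\,\mathrm{rank}(\mathsf{M}_0)+\sum_{i\geq 1}\mathrm{rank}(\mathsf{M}_i)\geq n+2$ forces all ranks to equal one. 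Your alternative route to $\mathrm{rank}(\mathsf{M}_0)=1$ via Loewy-series arguments is only sketched, and you never address the ranks of the $\mathsf{M}_i$ for $i\geq 1$ at all. Relatedly, your parenthetical claim that a $2$ in an off-diagonal slot of the non-zero row of $\mathsf{M}_0$ would contradict $\tfrac{1}{2}\mathsf{M}_0$ being a rank-one idempotent is false: a matrix with single non-zero row $(2,2,\dots,2)$ satisfies $\mathsf{M}_0^2=2\mathsf{M}_0$ and has rank one, so quasi-idempotency places no constraint on those entries.

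Second, and more seriously, the combinatorial constraints you invoke (Lemma~\ref{lem1}, Theorem~\ref{Flor}, the multiplication rule $\mathsf{M}_i\mathsf{M}_j=a_j\mathsf{M}_i$, positivity and irreducibility of $\mathsf{M}$) admit a second solution, namely \eqref{eq:matsecondcase} of the paper: $\mathsf{M}_0$ with non-zero row $(2,2,\dots,2)$ and each $\mathsf{M}_i$ with non-zero row $(1,1,\dots,1)$. This arises exactly when some $\mathsf{M}_i$ with $i\geq 1$ has its non-zero row in the same position as $\mathsf{M}_0$; your assertion that this cannot happen because ``the right cells of the $F_{i0}$ are distinct'' is not an argument. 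One checks directly that this second family satisfies every relation coming from \eqref{eq:1}, so no amount of matrix bookkeeping eliminates it. The paper excludes it by a genuinely representation-theoretic argument: self-adjointness of $F_{00}$ gives the transposed matrix on simples, whence $\overline{\mathbf{M}}(F_{00})\hat{L}_0\simeq Q_0$, $\dim(\epsilon_0B\epsilon_0)=2$ and $\overline{\mathbf{M}}(F_{i0})\simeq G_{\varphi(i)0}$, which forces $\mathbf{M}(F_{i0})Q_0\simeq Q_{\varphi(i)}^{\oplus 2}$, contradicting the entry $1$ in the first column of $\mathsf{M}_i$ in that case. This exclusion is the crux of the theorem and is absent from your plan; the tools you list are the right ones, but they must be aimed at this dichotomy between \eqref{eq:MatrixCase1} and \eqref{eq:matsecondcase} rather than at the idempotency of $\mathsf{M}_0$.
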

\begin{proof}
 Let $\mathbf{M}$ be as above. By the same arguments as in the proof of Lemma \ref{MatrixLemma2x2}, we may assume that $\mathbf{M}$ is 
 $1$-faithful, i.e.,\ that all $\mathbf{M}(F_{i0})$ are non-zero.

 As in the $n = 1$ case, we have that $\mathsf{M}$ has to be a positive, irreducible, integral matrix such that 
 $\mathsf{M}^2 = (n+2)\mathsf{M}$. From \cite[Proposition 4.1]{TZ} it follows that all such matrices have rank $1$ and trace $n+2$. 
 Furthermore, we have $\mathsf{M} = \mathsf{M}_0 + \mathsf{M}_1 + \cdots + \mathsf{M}_n$.
 
 We know from Lemma~\ref{lem1} that $\mathsf{M}_0$ has only $2$'s and $0$'s on the diagonal. Moreover $\mathsf{M}$ has trace $n+2$ 
 and is the sum of $n+1$ matrices which all are quasi-idempotent and non-zero. Thus each of them has a non-zero diagonal entry. 
 Hence it follows that $\mathsf{M}_0$ has exactly one $2$ on the diagonal. The other $\mathsf{M}_i$ have exactly one diagonal 
 entry equal to $1$ and all others equal to $0$.
 
 From \eqref{eq:1} we obtain that 
 \begin{align}
 \label{eq:MatrixMult}
  \mathsf{M}_i\mathsf{M}_j = a_j \mathsf{M}_i, \quad \text{ where } a_j = \begin{cases}
                                                                        2, & \quad j = 0,\\
                                                                        1, & \quad \text{else}.
                                                                       \end{cases}
 \end{align}
 
 This multiplication rule implies that no $\mathsf{M}_j$ has a zero column because if the $k$-th column of 
 $\mathsf{M}_j$ is zero, then so is the $k$-th column of $\mathsf{M}_i$, for all $0 \leq i \leq n$. Consequently, 
 the $k$-th column of $\mathsf{M}$ is zero which is impossible as $\mathsf{M}$ is a positive matrix.
 
 By Theorem \ref{Flor}, this implies that there exists a permutation matrix $P$ such that
 \begin{align*}
    P^{-1}\mathsf{M}_jP =  
    \begin{pmatrix}
      a_j & * & \cdots & *\\
      0 & 0 & \ldots & 0 \\
      \vdots & \vdots & \ddots& \vdots \\
      0 & 0 & \cdots  & 0\\
    \end{pmatrix}.
  \end{align*}
 Now, for each $2 \leq l \leq r$, pick $i_l\in \{1, 2, \ldots, n\}$ such that $\varphi(i_l) = l$ and set $i_1 = 0$. We define 
 \begin{displaymath}
  \mathsf{N} = \sum_{1 \leq l \leq r} \mathsf{M}_{i_l}.
 \end{displaymath}
 Then $\mathsf{N}$ is a positive $r \times r$-matrix such that $\mathsf{N}^2 = (r + 1)\mathsf{N}$ and such that the diagonal 
 of $\mathsf{N}$ is $(2, 1, \ldots, 1)$. 
 From \cite[Proposition~4.1]{TZ} we thus can conclude that $\mathsf{N} = (2, 1, \ldots, 1)^{\text{tr}}(1,1, \ldots, 1)$ or 
 $\mathsf{N} = (1, 1, \ldots, 1)^{\text{tr}}(2,1, \ldots, 1)$. For a matrix $\mathsf{M}_j$ which is not a summand of $\mathsf{N}$ we
 have to distinguish two cases. 
 
 Assume first that $\mathsf{M}_j$ is such that $\varphi(j) = l \neq 1$. Then we can
 replace $\mathsf{M}_{i_l}$ in the sum by $\mathsf{M}_j$. The result will have the same properties, i.e.,\ it will be quasi-idempotent
 and have the same trace. Since we did not change any of the other summands, we obtain that in this case $\mathsf{M}_j = \mathsf{M}_{i_l}$.
 
 Now assume that there exists $i_0 \neq 1$ such that the first row of $\mathsf{M}_{i_0}$ is non-zero,
 then 
 \begin{displaymath}
  \tilde{\mathsf{N}} = \mathsf{M}_{i_0} + \sum_{2 \leq l \leq r} \mathsf{M}_{i_l}
 \end{displaymath}
 has trace $(1,1, \dots, 1)$ and satisfies $\tilde{\mathsf{N}}^2 = r\tilde{\mathsf{N}}$. Then, by \cite[Proposition~4.1]{TZ}, 
 we obtain that $\tilde{\mathsf{N}}$ is the matrix with all entries equal to $1$. Again we did not change the other matrices
 and thus this case is only possible if $\mathsf{N}$ was of the form $\mathsf{N} = (2, 1, \ldots, 1)^{\text{tr}}(1,1, \ldots, 1)$.
 Thus we have established that the matrices $\mathsf{M}_i$ are either of the form 
 
 \begin{align*}
  \mathsf{M}_0 = \begin{pmatrix}
		2 & 1 & \cdots & 1\\
		0 & 0 & \cdots & 0\\
		0 & 0 & \cdots & 0
	      \end{pmatrix},  \quad
    \mathsf{M}_i = \begin{pmatrix}
		0 & 0 & \cdots & 0\\
		\vdots & \vdots &\ddots & \vdots \\
		0 & 0 & \cdots & 0\\
		2 & 1 & \cdots & 1 \\
		0 & 0 & \cdots & 0\\
		\vdots & \vdots & \ddots & \vdots \\
		0 & 0 & \cdots & 0
	      \end{pmatrix}, 
  \end{align*}
  or
  \begin{equation}\label{eq:matsecondcase}
  \mathsf{M}_0 = \begin{pmatrix}
		2 & 2 & \cdots & 2\\
		0 & 0 & \cdots & 0\\
		\vdots & \vdots & \ddots & \vdots \\
		0 & 0 & \cdots & 0
	      \end{pmatrix},  \quad
    \mathsf{M}_i = \begin{pmatrix}
		0 & 0 & \cdots & 0\\
		\vdots & \vdots &\ddots & \vdots \\
		0 & 0 & \cdots & 0\\
		1 & 1 & \cdots & 1 \\
		0 & 0 & \cdots & 0\\
		\vdots & \vdots & \ddots & \vdots \\
		0 & 0 & \cdots & 0
	      \end{pmatrix}.
  \end{equation}
  We note that some of the zero blocks in the matrices $\mathsf{M}_i$ may be empty, for example if the non-zero row
  is the first or last row.
  
  Next we want to prove that the case given by \eqref{eq:matsecondcase} cannot occur. The arguments here are 
  analogous to the arguments in the case of $n = 1$ for the corresponding set of matrices.

  Let $\mathbf{M}$ be a simple transitive $2$-representation of $\cC$ of rank $r$ with apex $\l$. 
  Consider the abelianization $\overline{\mathbf{M}}$. Assume that $\mathbf{M}$ is such that the matrices of actions of 
  the $F_{i0}$ are given as described in \eqref{eq:matsecondcase}. As in Section \ref{s4}, we let $B$ 
  be the basic, associative, unital $\Bbbk$-algebra such that $B$-mod is equivalent to 
  $\overline{\mathbf{M}}(\mathtt{i})$. By Remark~\ref{rmk:projfunc}, we know that all $F_{i0}$ act as
  projective functors, that is by tensoring with projective $B$-$B$-bimodules. 

  From the explicit form of the matrices given by \eqref{eq:matsecondcase} it is clear that 
  \begin{align*}
  \overline{\mathbf{M}}(F_{i0}) \simeq G_{\varphi(i)0}^{\oplus a_{\varphi(i)0}} \oplus G_{\varphi(i)1}^{\oplus a_{\varphi(i)1}} 
  \oplus \cdots \oplus G_{\varphi(i)r-1}^{\oplus a_{\varphi(i)r-1}},
  \end{align*}
  since the action of $F_{i0}$ sends every projective $Q_j$ to some number of copies of the 
  projective $Q_{\varphi(i)}$.

  Now, we have that $F_{00}$ is self-adjoint and thus we can obtain the matrix of $F_{00}$ in 
  the basis of simples by simply transposing $\mathsf{M}_0$, that is, it is given by 
  \begin{displaymath}
  \begin{pmatrix} 
    2 & 0 & \cdots & 0 \\
    2 & 0 & \cdots & 0 \\
    \vdots & \vdots & \ddots & \vdots \\
    2 & 0 & \cdots & 0
  \end{pmatrix}.
  \end{displaymath}

  Consequently, $F_{00}\hat{L}_s = 0$ if $s > 0$ and $F_{00}\hat{L}_0$ is isomorphic to either $Q_0$ or $Q_0 \oplus Q_0$.
  In the second case the socle of $Q_0$ must contain some $\hat{L}_s$ with $s > 0$ which is not possible as
  \begin{multline*}
  0 \neq \on{dim}(\operatorname{Hom}(\hat{L}_s,Q_0 \oplus Q_0)) = \on{dim}(\operatorname{Hom}(\hat{L}_s,F_{00}\hat{L}_0)) 
  = \\ = \on{dim}(\operatorname{Hom}(F_{00}\hat{L}_s,\hat{L}_0)) = \on{dim}(\operatorname{Hom}(0,\hat{L}_0)) = 0, 
  \end{multline*}
  Therefore $F_{00}\hat{L}_0 \simeq Q_0$ and hence the projective $Q_{0}$ has socle isomorphic to $\hat{L}_0$ and
  two copies of each simple $\hat{L}_s$ in its Jordan-H\"{o}lder series.
  In particular we see that 
  \begin{align*}
  2 = \on{dim}(\on{Hom}(Q_{0}, Q_{0})) = \on{dim}(\epsilon_{0}B\epsilon_{0}).
  \end{align*}
  From $F_{00}\hat{L}_0 \simeq Q_0$ and $F_{00}\hat{L}_s = 0$ if $s > 0$ we have $\overline{\mathbf{M}}(F_{00}) \simeq G_{00}$. 
  Further, from \eqref{eq:1} we have, for $i \neq 0$, 
  that $F_{00} \circ F_{i0} \simeq F_{00}$, which yields, by a similar argument as in the proof of 
  Lemma \ref{MatrixLemma2x2}, that $\overline{\mathbf{M}}(F_{i0}) \simeq G_{\varphi(i)0}$.
  This implies that 
  \begin{align*}
  \mathbf{M}(F_{i0})Q_{0} \simeq G_{\varphi(i)0}Q_{0} \simeq B\epsilon_{\varphi(i)} \otimes \epsilon_{0} B \otimes_B B\epsilon_{0} 
  \simeq B\epsilon_{\varphi(i)}^{\oplus \on{dim}(\epsilon_{0}B\epsilon_{0})} \simeq Q_{\varphi(i)}^{\oplus 2}.
  \end{align*}
  On the other hand, the form of $\mathsf{M}_{i}$ yields that $\mathbf{M}(F_{i0})Q_{\varphi(1)} = Q_{\varphi(i)}$, 
  a contradiction. This shows that the only possible matrices for $\mathbf{M}$ are the matrices given in \eqref{eq:MatrixCase1}. 
\end{proof}

\subsection{Speculations about a proof strategy for Conjecture \ref{GeneralCaseConjecture}}\label{s5.3}
Theorem \ref{MatrixThmGeneral} serves as evidence for Conjecture \ref{GeneralCaseConjecture} as the only matrices of 
the action of $F_{i0}$ are the ones which correspond to the cell $2$-representation for $\cC_k$, where $k \leq n$. This is 
an indication but not a proof that simple transitive $2$-representations of $\cC$ with these matrices exist. Furthermore, 
it is unclear whether, for each set of matrices, there only exists one simple transitive $2$-representation corresponding to this set.  

First, as already mentioned above, the matrices that are possible are exactly those of the simple transitive $2$-representations for the 
cases $k \leq n$. So it seems possible that there is a $2$-functor from $\cC_n$ to $\cC_k$ which maps different $F_{i0}$ of $\cC_n$ to
isomorphic $1$-morphisms in $\cC_k$. However, we do not know how to construct such a $2$-functor explicitly.

Now observe that, for each $n \geq 1$, we can associate to $\Lambda_n$ its $\Bbbk$-linear path category $\mathcal{A}_n$, i.e.,\ the category 
whose objects are the vertices of $\overline{S_n}$ and generating morphisms are the arrows of $\overline{S_n}$. 
Composition of morphisms is given as compositions of paths generated by the arrows our quiver and is subject to the relations 
descried in Section~\ref{s2.5}. 

Now observe that, given $n \geq k \geq 1$ and a surjection $\varphi: \{1, 2, \ldots, n\} \to \{2, 3, \ldots k\}$ of the kind
as described in the conjecture, there exists a functor $F_{\varphi}: \mathcal{A}_n \to \mathcal{A}_k$ which 
maps objects of $\mathcal{A}_n$ to objects of $\mathcal{A}_k$ according to $\varphi$ and it maps $a_i$ to $a_{\varphi(i)}$ and 
similarly for $b_i$. Now this functor induces a functor from $\mathcal{A}_k$-$\on{mod}$ to $\mathcal{A}_n$-$\on{mod}$, and the latter
categories are equivalent to $\Lambda_k$-$\on{mod}$ and $\Lambda_n$-$\on{mod}$, respectively. However, this functor does not map 
projectives to projectives and thus to construct the $2$-representation we want, some modifications are required. 

Similarly, there is a homomorphism of $\Bbbk$-algebras from $\Lambda_k$ to $\Lambda_n$ which maps, for $i \in \{2, 3, \ldots, k\}$,
the primitive idempotent corresponding to $i$ in $\Lambda_k$ to the sum of all primitive idempotents in the preimage of $i$ in $\Lambda_n$.
This homomorphism induces a functor from $\Lambda_n$-$\on{mod}$ to $\Lambda_k$-$\on{mod}$. 
Again this functor, unfortunately, does not map projectives to projectives and thus requires some adjustment in order to be used to 
construct a $2$-representation.

\section{The right cell case}\label{s6}
In this section we consider a different full $2$-subcategory of $\cC_{\Lambda_n}$. Here we consider the full $2$-subcategory
with respect to a right cell and not a left cell. More precisely we define
\[
 \cC^{r} := \cC_n^{r} = \text{add}(\mathbb{1}_{\mathtt{1}}, F_{00}, F_{01}, \ldots, F_{0n}).
\]
In this case we see that $\cC^{r}$ has the singleton cell containing only the identity which is a left, right and 
two-sided cell at the same time. Moreover, there is one more two-sided cell containing all other $1$-morphisms which also is a right 
cell and then each $F_{0j}$ gives forms its own singleton left cell $\l_j$. This can be seen easily when looking at the 
composition table of the $F_{0j}$'s:
\begin{align*}
F_{0i}\circ F_{0j} = b_iF_{0j}, \quad \text{where } b_i = \begin{cases}
                                                           2, \quad &\text{if } i = 0,\\
                                                           1, \quad &\text{else}.
                                                          \end{cases}
\end{align*}

Similarly to the above, we set $\displaystyle F := \bigoplus_{i = 0}^n F_{0i}$ and make the same assumptions about the algebra $B$.

\subsection{Matrix analysis}\label{s6.1}
Let $n \geq 1$. Then we know that $F^2 = F^{\oplus (n+2)}$ and thus, assuming $1$-faithfulness as above, 
we know that $\mathsf{M} = [F]$ is a positive integral matrix satisfying $\mathsf{M}^2 = (n+2)\mathsf{M}$.
Moreover, we have that 
\[
 \mathsf{M} = \mathsf{M}_0 + \mathsf{M}_1 + \cdots + \mathsf{M}_n,
\]
where $\mathsf{M}_i = [F_{0i}]$, i.e.,\ $\mathsf{M}$ is the sum of $n + 1$ non-negative matrices. By Lemma~\ref{lem1}, 
we know that $\mathsf{M}_0$ has all rows except the first one equal to zero and the first diagonal entry equals $2$. 
However, this implies that, for $i>0$, the matrix $2\mathsf{M}_i = \mathsf{M}_0 \cdot \mathsf{M}_i$ has only zero rows except 
for possibly the first row. Consequently, the only non-zero row of $\mathsf{M}$ is the first row. Hence $\mathsf{M}$ is a $1 \times 1$-matrix.
This yields the following.
\begin{proposition} \label{prop1right}
 Let $\mathbf{M}$ be a simple transitive $2$-representation of $\cC^r$. Then $\mathbf{M}$ has rank $1$.
\end{proposition}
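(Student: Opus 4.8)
The plan is to establish Proposition~\ref{prop1right} by a direct matrix computation, exploiting the rigid structure forced on $\mathsf{M}_0$ by Lemma~\ref{lem1} together with the composition rule $F_{00}\circ F_{0i}=2F_{0i}$ (respectively $F_{0i}\circ F_{0j}=F_{0j}$), which at the level of matrices of the action reads $\mathsf{M}_0\cdot\mathsf{M}_i=2\mathsf{M}_i$ for each $i>0$. The key observation is that Lemma~\ref{lem1} provides an ordering of the indecomposable objects in which $\mathsf{M}_0$ has the block shape $\left(\begin{smallmatrix}2E&*\\0&0\end{smallmatrix}\right)$. For the right cell subcategory $\cC^r$ the relevant statement is the mirrored one: I first need to verify that the \emph{right-cell} analogue of Lemma~\ref{lem1} holds, giving an ordering in which $\mathsf{M}_0$ has all rows zero except the first, with first diagonal entry equal to $2$. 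Since $F_{00}$ is self-adjoint and the composition table for $\cC^r$ is the transpose of the one for $\cC$, this follows \emph{mutatis mutandis} from the same argument (passing to transposes), exactly as the excerpt already asserts when it invokes Lemma~\ref{lem1}.

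Granting this shape of $\mathsf{M}_0$, the argument is then purely computational. Writing $\mathsf{M}_0$ with only its first row possibly nonzero, I would compute the product $\mathsf{M}_0\cdot\mathsf{M}_i$ for $i>0$ and observe that, because $\mathsf{M}_0$ has zero rows below the first, every row of the product below the first is automatically zero; hence the relation $2\mathsf{M}_i=\mathsf{M}_0\cdot\mathsf{M}_i$ forces every $\mathsf{M}_i$ with $i>0$ to have all rows zero except possibly its first. Summing over $i$ and adding $\mathsf{M}_0$, I conclude that $\mathsf{M}=\sum_{i=0}^n\mathsf{M}_i$ has only its first row nonzero.

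The final step uses transitivity. By the discussion preceding Lemma~\ref{lem1}, the matrix $\mathsf{M}=[F]$ must be irreducible because $\mathbf{M}$ is transitive. But an irreducible non-negative integral matrix cannot have a zero row unless it is of size $1\times 1$: a zero row means the corresponding index is not reachable, contradicting irreducibility whenever there is more than one index. Therefore $\mathsf{M}$ is a $1\times 1$ matrix, which is precisely the statement that $\mathbf{M}$ has rank $1$.

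I expect the only genuine subtlety to be the bookkeeping around the reduction to the $1$-faithful case and the mirrored version of Lemma~\ref{lem1}. As in Section~\ref{s4}, one must first dispose of the degenerate possibility that some $\mathbf{M}(F_{0i})=0$; the composition table $F_{0i}\circ F_{0j}=b_iF_{0j}$ shows that if any $F_{0i}$ with $i$ in the apex acts as zero then the whole apex collapses, reducing $\mathbf{M}$ to a $2$-representation of the identity cell, which is handled by \cite[Theorem~18]{MM5}. Once $1$-faithfulness is assumed, everything is forced by the block shape of $\mathsf{M}_0$ and irreducibility, so the hard part is really just checking that the self-adjointness of $F_{00}$ together with the transposed composition table legitimately yields the row-degenerate form of $\mathsf{M}_0$ in the right-cell setting; the rest is a one-line matrix multiplication.
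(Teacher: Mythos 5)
Your proposal follows essentially the same route as the paper's proof: Lemma~\ref{lem1} pins down $\mathsf{M}_0$ to have only its first row non-zero, the relation $\mathsf{M}_0\cdot\mathsf{M}_i=2\mathsf{M}_i$ propagates this row-degeneracy to every $\mathsf{M}_i$ with $i>0$, and positivity/irreducibility of $\mathsf{M}$ then forces it to be $1\times 1$. The only cosmetic difference is your concern about needing a ``mirrored'' version of Lemma~\ref{lem1}: none is required, since that lemma already gives the row form of $[F_{00}]$ (zero rows below the block $2E$) and its proof uses only properties of $F_{00}$ itself, so it applies verbatim in the $\cC^r$ setting, exactly as the paper's direct citation assumes.
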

Now, using this, we can prove the following.
\begin{theorem}
 Let $\mathbf{M}$ be a simple transitive $2$-representation of $\cC^r$. Then $\mathbf{M}$ is equivalent to a cell $2$-representation
 of $\cC^r$.
\end{theorem}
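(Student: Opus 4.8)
The plan is to leverage Proposition~\ref{prop1right}, which has just reduced the problem to the situation where $\mathbf{M}(\mathtt{i})$ has rank $1$, so that $B$ is a local algebra and there is a single indecomposable projective $Q_0$. First I would invoke Remark~\ref{rmk:projfunc} to conclude that each $F_{0i}$ acts as a projective functor, hence $\overline{\mathbf{M}}(F_{0i}) \simeq G_{00}^{\oplus m_i}$ for some non-negative integers $m_i$, since $G_{00}$ is the only nonzero projective $B$-$B$-bimodule available in the rank-$1$ setting. The composition relation $F_{0i} \circ F_{0j} = b_i F_{0j}$ together with $\overline{\mathbf{M}}(F_{0i} \circ F_{0j}) = \overline{\mathbf{M}}(F_{0i}) \circ \overline{\mathbf{M}}(F_{0j})$ then pins down the multiplicities: applying the self-adjointness of $F_{00}$ (as in the proofs of Lemma~\ref{MatrixLemma2x2} and Theorem~\ref{MatrixThmGeneral}) should force $F_{00}$ to send $\hat{L}_0$ to $Q_0$ and determine $\dim(\epsilon_0 B \epsilon_0)$, thereby computing the Cartan matrix of $B$ completely.

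Next I would construct the comparison morphism in the same manner as in the proof of Theorem~\ref{SimpleTransitiveIsCell2x2}. Concretely, I would define a map $\Phi \colon \mathbf{P}_{\mathtt{i}} \to \overline{\mathbf{M}}$ by sending $\mathbb{1}_{\mathtt{i}}$ to $\hat{L}_0$ (the unique simple, since the rank is $1$), and check that $\Phi$ carries the additive closure $\mathbf{N} = \on{add}(\setof{F}{F \in \l})$ of the apex to the subcategory $\overline{\mathbf{M}}_{\text{proj}}$ of projectives, which is equivalent to $\mathbf{M}$. Because the rank is $1$, the bijection on isomorphism classes of indecomposable objects is immediate; it remains only to verify that $\Phi$ induces isomorphisms on the relevant Hom-spaces, which reduces, exactly as before, to matching the Cartan matrix of $\mathbf{M}$ with that of the cell $2$-representation $\mathbf{C}_{\l_0}$.

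The main obstacle is establishing that the computed Cartan matrix forces $\mathbf{M}$ to coincide with the cell $2$-representation rather than merely sharing numerical data with it. The delicate point is ruling out proper $\cC^r$-stable ideals: one must show that no nonzero radical submodule of $Q_0$ survives the action of the $F_{0i}$, using the fact that the $F_{0i}$ annihilate all simples except $\hat{L}_0$. This is the analogue of the argument in Section~\ref{s4.2} involving the short exact sequence $0 \to N \to Q_1 \to K \to 0$, but in the rank-$1$ case it is considerably simpler because there is only one simple to track. I expect that simple transitivity, combined with the projective-functor description from Remark~\ref{rmk:projfunc}, leaves no room for a nontrivial invariant ideal, so that $\Phi$ must be an equivalence. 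Since $\cC^r$ has a unique left cell $\l_0$ not containing $\mathbb{1}_{\mathtt{i}}$ whose cell $2$-representation also has rank $1$, the resulting equivalence identifies $\mathbf{M}$ with $\mathbf{C}_{\l_0}$, completing the proof.
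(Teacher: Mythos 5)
Your overall strategy is the same as the paper's: reduce to rank one via Proposition~\ref{prop1right}, use self-adjointness of $F_{00}$ to determine the Cartan matrix, and then transport the cell $2$-representation through $\Phi$. However, there is a genuine gap at the step you describe as ``pinning down the multiplicities''. Writing $\overline{\mathbf{M}}(F_{0i}) \simeq G_{00}^{\oplus m_i}$ and $d = \on{dim}(\epsilon_0 B \epsilon_0)$, the relation $F_{00}\circ F_{00}\simeq F_{00}^{\oplus 2}$ only yields $m_0 d = 2$, which leaves two cases: $(m_0,d)=(1,2)$, i.e.\ $B\cong\Bbbk[x]/(x^2)$, and $(m_0,d)=(2,1)$, i.e.\ $B\cong\Bbbk$ with $F_{00}$ acting as $\mathbb{1}_{\mathtt{i}}^{\oplus 2}$. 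Self-adjointness does not discriminate between them: in both cases $[F_{00}]=(2)$ is symmetric and $\overline{\mathbf{M}}(F_{00})\hat{L}_0$ is a projective module with exactly two copies of $\hat{L}_0$ in its Jordan--H\"older series (namely $Q_0$ of Loewy length two in the first case, and $Q_0^{\oplus 2}$ with $Q_0$ simple in the second). So neither the composition table nor adjunction determines $\on{dim}(\epsilon_0B\epsilon_0)$; an extra input is required. The paper supplies it by restricting to the $2$-subcategory $\on{add}(\mathbb{1}_{\mathtt{i}}, F_{00})$, which satisfies the hypotheses of \cite[Theorem~18]{MM5}: its simple transitive $2$-representations are cell $2$-representations, one of which annihilates $F_{00}$ and the other of which lives on $\Bbbk[x]/(x^2)$-$\on{proj}$, so none lives on $\Bbbk$-$\on{proj}$ with $F_{00}$ acting nontrivially. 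This is what excludes $B\cong\Bbbk$ and forces $\overline{\mathbf{M}}(F_{00})\hat{L}_0\simeq Q_0$; without some such argument your proof does not close.

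Two smaller points. First, the obstacle you single out at the end --- ruling out a radical submodule of $Q_0$ generating a $\cC^r$-stable ideal --- is not where the difficulty lies: once $Q_0$ is known to have Loewy length two, the Cartan matrices of $\mathbf{M}$ and of the cell $2$-representation are both $(2)$ and the paper concludes immediately, with no analogue of the short exact sequence argument of Section~\ref{s4.2}. Second, you should treat separately the degenerate case in which all $F_{0i}$ act by zero (apex $\mathcal{J}_e$), which the paper handles first via \cite[Theorem~18]{MM5}; and note that $\cC^r$ has $n+1$ singleton left cells $\l_j$ besides the one containing the identity, not a unique one, so the final identification of $\mathbf{M}$ with a specific cell $2$-representation deserves a sentence.
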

\begin{proof}
 Assume first that $\mathcal{J}_e = \on{add}(\mathbb{1}_{\mathtt{i}})$ is the apex of $\mathbf{M}$, i.e.,\ $\mathbf{M}(F_{0i}) = 0$ for 
 all $i$. Then the quotient of $\cC^r_n$ by the $2$-ideal generated by all $F_{0i}$ satisfies the assumptions of \cite[Theorem 18]{MM5}
 and hence $\mathbf{M}$ is equivalent to a cell $2$-representation.
 
 Thus we may now assume that $\mathbf{M}(F_{0i})$ is non-zero for all $i$. Then by Proposition~\ref{prop1right} we know that
 $\mathbf{M}$ has rank $1$. Moreover, we know that $\overline{\mathbf{M}}(F_{00})$ is self-adjoint and hence we have that $(2)$ 
 is also the matrix of $F_{00}$ in the basis of simples. Moreover, we have that $\overline{\mathbf{M}}(F_{00})$ sends the simple
 $\hat{L}_0$ to a projective, i.e. a number of copies of $Q_0$. As $\overline{\mathbf{M}}(F_{00})\hat{L}_0$ contains two copies of 
 $\hat{L}_0$ and nothing else in its Jordan-H\"{o}lder series, we have that either $\hat{L}_1 = Q_0$ or that $Q_0$ has Loewy length 
 two. In the first case we have that $B$ is isomorphic to $\Bbbk$ and in the second case to $\Bbbk[x]\slash(x^2)$. We can now exclude 
 the first case as there is no simple transitive $2$-representation of $\on{add}(\mathbb{1}_{\mathtt{i}}, F_{00})$ on $\Bbbk$-proj. Indeed,
 the $2$-category $\on{add}(\mathbb{1}_{\mathtt{i}}, F_{00})$ satisfies the conditions of \cite[Theorem~18]{MM5} and hence all of its simple transitive 
 $2$-representations are equivalent to cell $2$-representations, one of which annihilates $F_{00}$ and the other one is
 on $\Bbbk[x]\slash(x^2)$-$\on{mod}$. This shows that $\overline{\mathbf{M}}(F_{00})\hat{L}_0 = Q_0$. Now, we can 
 define 
 \begin{align*}
  \Phi: \mathbf{P}_{\mathtt{i}} & \to \overline{\mathbf{M}},\\
  \mathbb{1}_{\mathtt{i}} & \mapsto \hat{L}_0.
 \end{align*}
 We have just seen that this restricts to a bijective map from $\on{add}(\mathbb{1}_{\mathtt{i}}, F_{00})$ to 
 $\overline{\mathbf{M}}_\text{proj}$ as the Cartan matrices in both cases are $(2)$, we have that $\Phi$ induces an equivalence 
 between $\cC_{\l}$ and $\overline{\mathbf{M}} \simeq \mathbf{M}$.
\end{proof}

{\bf Acknowledgments.}  The author wants to thank his supervisor Volodymyr Mazorchuk for many helpful discussions 
and comments on early versions of this manuscript. 

\bibliographystyle{alpha}

\end{document}